%%%%%%%%%%%%%%%%%%%%%%%%%%%%%%%%%%%%%%%%%%%%%%%%%%%%%%%%%%%%%%%%%%%%%%
%
% Bernoulli-Dunkl and Apostol-Euler-Dunkl polynomials,
% by
% \'O. Ciaurri, A. J. Dur\'an, M. P\'erez, and J. L. Varona
%
% email: oscar.ciaurri@unirioja.es, duran@us.es, mperez@unizar.es,
%        jvarona@unirioja.es
%
% COMIENZO DE ESCRITURA: 2016/02/04
% LAST MODIFICATION: 2016/08/23
%
%%%%%%%%%%%%%%%%%%%%%%%%%%%%%%%%%%%%%%%%%%%%%%%%%%%%%%%%%%%%%%%%%%%%%%

%\documentclass[10pt,leqno]{article}
\documentclass[10pt]{article}

\usepackage{amsmath}
\usepackage{amsfonts}
\usepackage{amsthm}

\usepackage{hyperref}
%\usepackage[colorlinks,linkcolor=blue,citecolor=blue]{hyperref}

%%% ONLY FOR DRAFTS:
%\usepackage[latin1]{inputenc}
%\usepackage[notcite,notref]{showkeys}
%\usepackage{color}
%\newcommand{\rojo}[1]{\textcolor{red}{#1}}

%%% ONLY FOR DRAFTS (put it after hyperref)
%\usepackage{backref}
%\usepackage{refcheck}

%%% \allowdisplaybreaks es una orden de amsmath que permite cortes de
%%% p\'agina dentro de una f\'ormula de varias l\'ineas (align, multline, gather, ...)
\allowdisplaybreaks[3]
%%% Escala de 1 (permite algo) a 4 (ninguna traba para cortar).
%%% Si en alg\'un sitio queremos impedir un posible corte, usar \\* en vez de \\

\theoremstyle{plain}
\newtheorem{theorem}{Theorem}[section]
\newtheorem{lemma}[theorem]{Lemma}
\newtheorem{corollary}[theorem]{Corollary}

\theoremstyle{definition}

\newtheorem{remark}{Remark}

\numberwithin{equation}{section} % To include the number of section

\newcommand{\Ea}{E_{\alpha}}
\newcommand{\La}{\Lambda_{\alpha}}
\newcommand{\Fa}{\mathcal{F}_{\alpha}} % Dunkl transform of order \alpha
\newcommand{\mua}{\mu_{\alpha}}

\newcommand{\I}{\mathcal{I}}
\newcommand{\am}{\mathfrak{a}}

\newcommand{\B}{\mathfrak{B}}
\newcommand{\E}{\mathfrak{E}}

\newcommand{\RR}{\mathbb{R}}
\newcommand{\ZZ}{\mathbb{Z}}

\newcommand{\CC}{\mathbb{C}}

\newcommand{\ZZO}{\mathbb{Z} \setminus \{0\}}

\newcommand{\cc}{\gamma}

\renewcommand{\Re}{\operatorname{Re}}
\renewcommand{\Im}{\operatorname{Im}}

%\newcommand{\wt}{\widetilde}

%%%%%%%%%
\title{Bernoulli-Dunkl and Apostol-Euler-Dunkl polynomials
with applications to series involving zeros of Bessel functions%
\footnote{Partially supported by MTM2015-65888-C4-1-P and MTM2015-65888-C4-4-P (Ministerio de Econom\'ia y Competitividad),
FQM-262, FQM-7276 (Junta de Andaluc\'ia), E64 (Gobierno de Arag\'on) and Feder Funds (European Union).}}
%%%%%%%%%

%%%%%%%%%
\author{\'Oscar Ciaurri\textsuperscript{1}, Antonio J. Dur\'an\textsuperscript{2}, Mario P\'erez\textsuperscript{3} and Juan L. Varona\textsuperscript{1}\\[6pt]
\small\textsuperscript{1}Departamento de Matem\'aticas y Computaci\'on and CIME,
Universidad de La Rioja,\\[-2pt]
\small 26006 Logro\~no, Spain. Emails: {oscar.ciaurri@unirioja.es}, {jvarona@unirioja.es}\\[6pt]
\small\textsuperscript{2}Departamento de An\'alisis Matem\'atico and IMUS,
Universidad de Sevilla,\\[-2pt]
\small 41080 Sevilla, Spain. Email: {duran@us.es}\\[6pt]
\small\textsuperscript{3}Departamento de Matem\'aticas and IUMA,
Universidad de Zaragoza,\\[-2pt]
\small 50009 Zaragoza, Spain. Email: {mperez@unizar.es}}
%%%%%%%%%

\date{}

%%%%%%%%%
\begin{document}
%%%%%%%%%

% To completely avoid underfulls and overfulls in this manuscript:
%\sloppy

%%%%%%%%%
\maketitle
%%%%%%%%%

\begin{abstract}
We introduce Bernoulli-Dunkl and Apostol-Euler-Dunkl polynomials as generalizations of Bernoulli and Apostol-Euler polynomials,
where the role of the derivative is now played by the Dunkl operator on the real line. We use them to sum a bunch of series involving the zeros of Bessel functions.

2010 Mathematics Subject Classification: Primary 11B68; Secondary 42C10, 33C10, 11M41.

Keywords: Appell-Dunkl sequences, Bernoulli-Dunkl polynomials, Apostol-Euler-Dunkl polynomials,
Dunkl kernel, Fourier-Dunkl series, zeros of Bessel functions, Rayleigh functions.
\end{abstract}

%%%%%%%%%
\section{Introduction}
%%%%%%%%%

Along the middle years of the XVIII-th century, Euler proved that
\begin{align}
\label{eq:Euler}
  \sum_{j=1}^{\infty} \frac{1}{j^{2k}}
  &= \frac{(-1)^{k-1}2^{2k-1}\pi^{2k}}{(2k)!} B_{2k}, \quad k=1,2,\dots,
  \\
  \label{eq:Euler2}
  \sum_{j=1}^{\infty} \frac{(-1)^{j}}{(2j-1)^{2k+1}}
  &= \frac{(-1)^{k+1} \pi^{2k+1}}{4(2k)!}\,E_{2k}, \quad k=0,1,2,\dots,
\end{align}
where $B_{2k}$ and $E_{2k}$ are the Bernoulli and Euler numbers, respectively. Bernoulli and Euler numbers are the particular values $B_{2k} = B_{2k}(0)$ and $E_{2k}=E_{2k}(1/2)$, where $\{B_n(x)\}_n$ and $\{E_n(x)\}_n$ are, respectively, the Bernoulli and Euler polynomials defined by the generating functions
\[
  \frac{te^{xt}}{e^t-1} = \sum_{n=0}^\infty B_n(x)\frac{t^n}{n!},
  \qquad
  \frac{2e^{xt}}{e^t+1} = \sum_{n=0}^\infty E_n(x)\frac{t^n}{n!}.
\]
The sum \eqref{eq:Euler} can be seen as an identity for the sum of the reciprocals of the zeros of $\sin(x)$ while in the sum \eqref{eq:Euler2}
the zeros of $\cos(x)$ are modified by an alternating sign.
The sine and cosine functions can be expressed in terms of Bessel functions: $\sin(x) = (\pi x/2)^{1/2} J_{1/2}(x)$
and  $\cos(x) = (\pi x/2)^{1/2}J_{-1/2}(x)$, respectively. So a natural generalization of~\eqref{eq:Euler} is to compute the series
\begin{equation}
\label{eq:pmj}
  \sum_{j} \frac{1}{s_{j,\alpha}^{2k}},
\end{equation}
where $\{s_{j,\alpha}\}_j$ are the zeros of a Bessel function. This question, which has both mathematical and physical interest, has a somehow classical flavour and goes back to lord Rayleigh in 1874 \cite{Ray} (in fact, the sum \eqref{eq:pmj} as a function of $\alpha$ is usually called the Rayleigh function). Since then, many papers have been published which study, with different approaches, these and many other series
along with identities and other properties; see, for instance, \cite{Sne, Ki, Ke, BMPS,dL} or \cite[formula 11 in \S\,5.7.33]{PBM} (and the list is by no means exhaustive).

However, identities relating \eqref{eq:pmj} or any other sum involving the zeros of Bessel functions
with some kind of ``Bernoulli'' or ``Euler numbers" seem to be unknown.
Such identities could be considered a true generalization of~\eqref{eq:Euler} and~\eqref{eq:Euler2}.

The purpose of this paper is to introduce what we have called Bernoulli-Dunkl and Apostol-Euler-Dunkl polynomials. We will use them to sum a bunch of series involving the  zeros of Bessel functions, among which are the analogous to the series \eqref{eq:Euler} and~\eqref{eq:Euler2}.

Our approach is the following. Bernoulli and Euler polynomials are particular cases of the so-called
Appell sequences. An Appell sequence $\{P_n(x)\}_{n=0}^{\infty}$ is a sequence of polynomials defined by a Taylor generating expansion
\begin{equation}
\label{eq:As}
  A(t) e^{xt} = \sum_{n=0}^{\infty} P_n(x) \frac{t^n}{n!},
\end{equation}
where $A(t)$ is a function analytic at $t=0$ with $A(0) \ne 0$.
Since the exponential function $e^x$ is invariant under the differential operator ${d}/{dx}$, it is easy to show that $P_n(x)$ is a polynomial of degree $n$ and $P'_n(x) = nP_{n-1}(x)$. Typical examples of Appell sequences are the Bernoulli and Euler polynomials above, or the probabilistic Hermite polynomials $\{\operatorname{He}_n(x)\}_{n=0}^{\infty}$.

For $\alpha\in \CC\setminus\{-1,-2,\dots \}$, we consider the entire functions
\begin{align*}
  \I_\alpha(z) &= 2^\alpha \Gamma(\alpha+1) \frac{J_\alpha(iz)}{(iz)^\alpha}, \\
  \Ea(z) &= \I_\alpha(z) + \frac{z}{2(\alpha+1)} \, \I_{\alpha+1}(z),
\end{align*}
where $J_\alpha$ is the Bessel function of order $\alpha$. A simple computation gives
\[
  \Ea(z) = \sum_{n=0}^\infty \frac{z^n}{\cc_{n,\alpha}}
\]
with
\[
\cc_{n,\alpha} =
\begin{cases}
  2^{2k}k!\,(\alpha+1)_k, & \text{if $n=2k$},\\
  2^{2k+1}k!\,(\alpha+1)_{k+1}, & \text{if $n=2k+1$}.
\end{cases}
\]
The entire function $\Ea$ is invariant under the Dunkl operator
\[
  \La f(x) = \frac{d}{dx}f(x) + \frac{2\alpha+1}{2}
  \left(\frac{f(x)-f(-x)}{x}\right).
\]
Let us note that, when $\alpha = -1/2$, we have $\cc_{n,-1/2} = n!$, $E_{-1/2}(x) = e^{x}$ and $\Lambda_{-1/2} = d/dx$.
Hence Appell sequences can be generalized by replacing $e^{xt}$ by $\Ea(xt)$ in~\eqref{eq:As}: given a function $A(t)$ analytic at $t=0$ such that $A(0) \ne 0$, we can associate to it an Appell-Dunkl sequence $\{A_n\}_{n=0}^{\infty}$ by the generating function
\begin{equation}
\label{eq:APs}
  A(t)\Ea(xt) = \sum_{n=0}^\infty A_n(x) \frac{t^n}{\cc_{n,\alpha}}.
\end{equation}
It is not difficult to check that $A_n$ is a polynomial of degree $n$ which satisfies $\La A_n=(n+(\alpha+1/2)(1-(-1)^n))A_{n-1}$.

Appell-Dunkl polynomials have already appeared in the literature as generalizations of the Hermite polynomials (see, for instance, \cite{AlS, Ros}).
But, as long as we know, no generalizations of Bernoulli or Euler polynomials have been considered using this approach.

This paper is organized as follows. In Section~\ref{sec:B-D}, we introduce our generalizations of Bernoulli and Euler polynomials.
More precisely, the Bernoulli-Dunkl polynomials $\{\B_{n,\alpha}\}_{n=0}^{\infty}$ are associated as in \eqref{eq:APs} to the generating function
\[
  A(t) = \frac{1}{\I_{\alpha+1}(t)}.
\]
On the other hand, the Apostol-Euler-Dunkl polynomials $\{\E_{n,\alpha,u}\}_{n=0}^{\infty}$ are associated as in \eqref{eq:APs} to the generating function
\[
  A(t) = \frac{u\I_{\alpha+1}(u)}{(t+u)\I_{\alpha+1}(t+u)},
\]
where the parameter $u$ is a complex number which is neither $0$ nor a zero of $\I_{\alpha+1}$.
In particular, we will show that $\B_{n,-1/2}(2x-1) = 2^nB_n(x)$ and $\E_{n,-1/2,i\pi/2}(2x-1) = 2^nE_n(x)$, where $B_n$ and $E_n$ are the Bernoulli and Euler polynomials, respectively.

In Section~\ref{sec:simplefr}, we consider some partial fraction decompositions related to the functions $\I_\alpha$. Using them and the polynomials introduced in Section~\ref{sec:B-D}, we sum in Section~\ref{sec:serieszeros} a bunch of series involving the zeros of Bessel functions, among which are the following examples. For any complex value of~$\alpha$ (except for the negative integers), we can order the zeros $s_{j,\alpha}$, $j\in \ZZ$, of the Bessel function $J_{\alpha+1}(x)/x^{\alpha+1}$ (notice that we have shifted the parameter $\alpha$ by~$1$) so that, $s_{j,\alpha} = -s_{-j,\alpha}$ and $0<\Re s_{j,\alpha}\le \Re s_{j+1,\alpha}$, $j\ge 0$ (\cite[\S\,15.41, p.~497]{Wat}).
The case $\alpha>-2$ is particularly relevant because then $s_{j,\alpha}$, $j\ge 0$, are positive numbers (\cite[\S\,15.27, p.~483]{Wat}). We then prove (Theorem~\ref{thm:sumassr})
\begin{equation}
\label{eq:sum1}
  \sum_{j=1}^\infty \frac{1}{s_{j,\alpha}^{2k}}
  = \frac{(-1)^{k+1}}{2^{2k}k!\,(\alpha+2)_{k-1}} \,\B_{2k,\alpha}(1),
\end{equation}
which can be considered a genuine generalization of~\eqref{eq:Euler}.

We also sum a variant of this series with ``alternate signs'' (Theorem~\ref{thm:sumassr}): for $\Re \alpha < 2k -3/2$,
\begin{equation}
\label{eq:sum2}
  \sum_{j=1}^\infty \frac{1}{\I_{\alpha}(is_{j,\alpha})s_{j,\alpha}^{2k}}
  = \frac{(-1)^{k+1}}{2^{2k}k!\,(\alpha+2)_{k-1}} \,\B_{2k,\alpha}(0).
\end{equation}
Indeed, in this context the factor $\I_{\alpha}(is_{j,\alpha})$, which multiplies each zero of the Bessel function, provides a generalization of the sign sequence $(-1)^j$ in \eqref{eq:Euler2} because for $\alpha=-1/2$ we have $\I_{-1/2}(is_{j,-1/2})=(-1)^j$. Using the Bernoulli-Dunkl polynomials one can explicitly compute the sum~\eqref{eq:sum2}.
For instance, for $k=1$ and $2$, one gets
\[
  \sum_{j=1}^\infty \frac{1}{\I_{\alpha}(is_{j,\alpha})s_{j,\alpha}^{2}}
  = -\frac{\alpha+1}{4(\alpha+2)},
  \qquad
  \sum_{j=1}^\infty \frac{1}{\I_{\alpha}(is_{j,\alpha})s_{j,\alpha}^{4}}
  = -\frac{(\alpha+1)(\alpha+2)}{32(\alpha+3)(\alpha+2)^2}.
\]
These series were considered in \cite{Sne} for $\alpha\ge -3/2$.

We also prove (Theorem~\ref{thm:cag2}) that, for $\Re \alpha < k -1/2$,
\[
  \sum_{j\in \ZZO} \frac{1}{\I_\alpha(is_{j,\alpha})(s_{j,\alpha}-u)^{n+1}}
  = 2(1+\alpha) \left(\frac{(-1)^n}{u^{n+1}}
  - \frac{i^n\E_{n,\alpha,iu}(0)}{u\I_{\alpha+1}(iu)\cc_n}\right),
\]
which can be considered a generalization of the Euler's sums \eqref{eq:Euler2}, because for $\alpha=-1/2$ and $u=\pi/2$, this series reduces to~\eqref{eq:Euler2}. Although we will insist again on this, throughout this paper any doubly infinite series of the form $\sum_{j\in \ZZ}$  must be understood as the principal value, that is, the limit of $\sum_{|j|\leq N}$ as $N$ goes to infinity.

Using our approach, we also sum some other relevant series involving the zeros of Bessel functions such as (see Theorem~\ref{thm:cag} and Theorem~\ref{thm:cag2})
\begin{gather*}
  \sum_{j\in \ZZ\setminus\{0,l\}} \frac{1}{(s_{j,\alpha}-s_{l,\alpha})^{k+1}},\\
  \sum_{j\in \ZZ\setminus\{0,l\}} \frac{1}{\I_\alpha(is_{j,\alpha})(s_{j,\alpha}-s_{l,\alpha})^{k+1}}
\end{gather*}
for $k\ge 0$. For instance,
\begin{align}
\label{eq:calggi}
  \sum_{j\in \ZZ\setminus\{0,l\}} \frac{1}{s_{j,\alpha}-s_{l,\alpha}}
  &= \frac{3+2\alpha}{2s_{l,\alpha}},\\
\nonumber
  \sum_{j\in \ZZ\setminus\{0,l\}} \frac{1}{(s_{j,\alpha}-s_{l,\alpha})^{2}}
  &= -\frac{(3+2\alpha)(7+2\alpha)}{12s_{l,\alpha}^2}+\frac{1}{3},\\
\nonumber
  \sum_{j\in \ZZ\setminus\{0,l\}} \frac{1}{\I_\alpha(is_{j,\alpha})(s_{j,\alpha}-s_{l,\alpha})}
  &= \frac{2(1+\alpha)}{s_{l,\alpha}} - \frac{1+2\alpha}{2s_{l,\alpha}\I_\alpha(is_{l,\alpha})},\\
\intertext{and}
\nonumber
  \sum_{j\in \ZZ\setminus\{0,l\}} \frac{1}{\I_\alpha(is_{j,\alpha})(s_{j,\alpha}-s_{l,\alpha})^{2}}
  &= \frac{(1+\alpha)(2\alpha-1)}{s_{l,\alpha}^2} \\
\nonumber
  &\qquad- \frac{1}{\I_\alpha(is_{l,\alpha})}
  \left( 1+\frac{(1+2\alpha)(2\alpha-3)}{2s_{l,\alpha}^2}\right).
\end{align}
The sum~\eqref{eq:calggi} is the Calogero series~\cite{Cal}.

For $\alpha >-1$, the Dunkl operator has associated the following orthonormal system:
\[
  e_{\alpha,j}(r) = \frac{2^{\alpha/2}\Gamma(\alpha+1)^{1/2}}
    {|\I_{\alpha}(is_{j,\alpha})|} \Ea(is_j r),
  \quad j\in \ZZO, \quad r \in (-1,1),
\]
and $e_{\alpha,0}(r) = 2^{(\alpha+1)/2}\Gamma(\alpha+2)^{1/2}$.
In the last section of this paper we find the following expansion of the Bernoulli-Dunkl polynomials with respect to this system (Theorem~\ref{thm:sfbd}):
\[
  \B_{n,\alpha}(x) = \frac{-(-i)^n\cc_n}{2^{1+\alpha/2}(\alpha+1)\Gamma(\alpha+1)^{1/2}}
  \sum_{j\in \ZZO} \frac{(-1)^j}{s_{j,\alpha}^n} \, e_{\alpha,j}(x),
\]
where the convergence is in $L^2\left( [-1,1],\tfrac{|x|^{2\alpha+1}}{2^{\alpha+1}\Gamma(\alpha+1)}\,dx\right)$.
This can be considered as a generalization of the Hurwitz expansion for Bernoulli polynomials in Fourier series
\[
  B_n(x) = -\frac{n!}{(2\pi i)^n} \sum_{j\in \ZZO}\frac{e^{2\pi i jx}}{j^n}.
\]

%%%%%%%%%
\section{Bernoulli-Dunkl and Apostol-Euler-Dunkl polynomials}
\label{sec:B-D}
%%%%%%%%%

As explained in the introduction, we will define Bernoulli-Dunkl and Apostol-Euler-Dunkl polynomials as particular cases of Appell-Dunkl polynomials.
To introduce Appell-Dunkl polynomials we need some preliminary notations.

%%%%%%%%%
\subsection{The Dunkl transform on the real line and the Appell-Dunkl polynomials}
%%%%%%%%%

For $\alpha > -1$, let $J_\alpha$ denote the Bessel function of order $\alpha$ and,
for complex values of the variable $z$, let
\[
  \I_\alpha(z)
  = 2^\alpha \Gamma(\alpha+1) \frac{J_\alpha(iz)}{(iz)^\alpha}
  = \Gamma(\alpha+1) \sum_{n=0}^\infty \frac{(z/2)^{2n}}{n!\,\Gamma(n+\alpha+1)}
  = {}_0F_1(\alpha+1,z^2/4)
\]
(the function $\I_\alpha$ is a slight variation of the so-called modified Bessel function of the first kind and order~$\alpha$, usually denoted by~$I_\alpha$; see~\cite{Wat}, \cite{Le} or~\cite{OlMa-NIST}).
Moreover, let us take
\[
  \Ea(z) = \I_\alpha(z) + \frac{z}{2(\alpha+1)}\,
  \I_{\alpha+1}(z),
  \qquad z \in \CC.
\]

The Dunkl operator $\La$ in the real line (with reflection group~$\ZZ_2$) is defined as
\begin{equation}
\label{eq:Duop}
  \La f(x) = \frac{d}{dx}f(x) + \frac{2\alpha+1}{2}
  \left(\frac{f(x)-f(-x)}{x}\right),
\end{equation}
acting on suitable functions $f$ on~$\RR$ (see \cite{Du} for $\alpha\ge-1/2$ and \cite{Ros} for the extension to $\alpha>-1$).
It is easy to check that, for any $\lambda \in \CC$, we have
\[
  \La \Ea(\lambda x) = \lambda \Ea(\lambda x).
\]
Let us note that $\Lambda_{-1/2} = d/dx$ and $E_{-1/2}(\lambda x) = e^{\lambda x}$.

The function $\Ea(z)$ is known as the Dunkl kernel because, in a similar way to the Fourier transform (which is the particular case $\alpha = -1/2$), we can define the Dunkl transform on the real line
\begin{equation}
\label{eq:trDu}
  \Fa f(y)
  = \int_{\RR} \Ea(-ixy)f(x)\,d\mua(x),
  \qquad y \in \RR,
\end{equation}
were $d\mua$ denotes the measure
\[
  d\mua(x) = \tfrac{1}{2^{\alpha+1}\Gamma(\alpha+1)}\, |x|^{2\alpha+1}\,dx
\]
(in particular, $d\mu_{-1/2}(x) = (2\pi)^{-1/2}\,dx$).
The Dunkl transform~\eqref{eq:trDu} can be extended to an isometric isomorphism on $L^2(\RR,\mua)$ and fulfills $\Fa^{-1}f(y) = \Fa f(-y)$.

From the definition, it is easy to check that
\[
  \Ea(z) = \sum_{n=0}^\infty \frac{z^n}{\cc_{n,\alpha}}
\]
with
\begin{equation}
\label{eq:ccna}
  \cc_{n,\alpha} =
\begin{cases}
  2^{2k}k!\,(\alpha+1)_k, & \text{if $n=2k$},\\
  2^{2k+1}k!\,(\alpha+1)_{k+1}, & \text{if $n=2k+1$},
\end{cases}
\end{equation}
and where $(a)_n$ denotes the Pochhammer symbol
\[
  (a)_n = a(a+1)(a+2) \cdots (a+n-1) = \frac{\Gamma(a+n)}{\Gamma(a)}
\]
(with $n$ a non-negative integer). Notice that $\cc_{n,-1/2} = n!$.
From~\eqref{eq:ccna}, one straightforwardly has
\begin{equation}
\label{eq:coccc}
  \frac{\cc_{n,\alpha}}{\cc_{n-1,\alpha}} = n+(\alpha+1/2)(1-(-1)^n).
\end{equation}
We also define
\[
  \binom{n}{j}_{\!\alpha} = \frac{\cc_{n,\alpha}}{\cc_{j,\alpha}\cc_{n-j,\alpha}},
\]
that becomes the ordinary binomial numbers in the case $\alpha=-1/2$.
To simplify the notation we sometimes write $\cc_{n,\alpha}=\cc_n$.

To every function $A(t)$ analytic in a neighborhood of $t=0$ with $A(0) \ne 0$, we associate a sequence of Appell-Dunkl polynomials $A_n(x)$ by the generating function
\begin{equation}
\label{eq:ad}
  A(t)\Ea(xt) = \sum_{n=0}^\infty A_n(x) \frac{t^n}{\cc_n}
\end{equation}
(in addition to the papers \cite{AlS, Ros} cited in the introduction, Appell-Dunkl sequences have been considered also, for instance, in \cite{BG, BKI, DH}).
From this definition, it is not difficult to prove that $A_n(x)$ is a polynomial of degree $n$ and $\La A_n(x) = \frac{\cc_{n}}{\cc_{n-1}}\, A_{n-1}(x)$ (when $\alpha=-1/2$, this becomes the classical $A'_n(x) = nA_{n-1}(x)$ of Appell sequences).

We straightforwardly have the following:

\begin{lemma}
\label{lem:xn}
The Appell-Dunkl polynomials $A_n(x)$, $n\ge 0$, defined by~\eqref{eq:ad} satisfy the recurrence relations
\[
  x^n = \cc_n\sum_{j=0}^n \frac{A_j(x)}{\cc_j} \, a_{n-j},
\]
where
\[
  \frac{1}{A(t)} = \sum_{n=0}^\infty a_nt^n.
\]
Moreover,
\[
  \La(A_n) = (n+(\alpha+1/2)(1-(-1)^n))A_{n-1}.
\]
\end{lemma}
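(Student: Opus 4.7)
The plan is to derive both identities directly from the defining generating function~\eqref{eq:ad}, reading off coefficients after a single algebraic manipulation in each case. Neither assertion should require any deep machinery, only the relation $\La\Ea(\lambda x)=\lambda\Ea(\lambda x)$ already recorded in the text and the identity~\eqref{eq:coccc} for the ratio $\cc_n/\cc_{n-1}$.

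For the first identity I would start from \eqref{eq:ad} and divide by $A(t)$, which is legitimate in a neighborhood of $t=0$ since $A(0)\ne 0$. This yields
\[
  \Ea(xt) = \frac{1}{A(t)}\sum_{n=0}^\infty A_n(x)\frac{t^n}{\cc_n}
  = \left(\sum_{n=0}^\infty a_n t^n\right)\left(\sum_{j=0}^\infty \frac{A_j(x)}{\cc_j}\,t^j\right).
\]
Since the left-hand side also equals $\sum_{n=0}^\infty \frac{x^n}{\cc_n}\,t^n$, expanding the Cauchy product on the right and matching the coefficient of $t^n$ gives $\frac{x^n}{\cc_n}=\sum_{j=0}^n a_{n-j}\frac{A_j(x)}{\cc_j}$, which is the asserted recurrence.

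For the second identity I would apply $\La$ (acting in the variable~$x$) to both sides of \eqref{eq:ad}. Because $\La$ is linear and continuous on the convergent power series in~$x$, it may be exchanged with the sum on the right, producing $\sum_{n=0}^\infty \La A_n(x)\,\frac{t^n}{\cc_n}$. On the left, $\La\bigl(\Ea(xt)\bigr)=t\,\Ea(xt)$ by the eigenfunction property, so the left-hand side becomes $t\,A(t)\Ea(xt)=\sum_{n=0}^\infty A_n(x)\,\frac{t^{n+1}}{\cc_n}=\sum_{n=1}^\infty A_{n-1}(x)\,\frac{t^n}{\cc_{n-1}}$. Matching the coefficient of $t^n$ yields $\La A_n(x)=\frac{\cc_n}{\cc_{n-1}}A_{n-1}(x)$, and substituting~\eqref{eq:coccc} for the ratio produces the stated formula.

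The only subtle point — and what I would regard as the main (if still minor) obstacle — is justifying the termwise application of $\La$ and the rearrangements of the double series. Both are routine because $\Ea(xt)$ and $A(t)$ are entire (resp.\ analytic near $t=0$) and the series in question converge absolutely and uniformly on compact sets; the odd part $(f(x)-f(-x))/x$ in the definition of $\La$ is handled by noting that $(\Ea(xt)-\Ea(-xt))/x$ still has a convergent power series expansion. Once this standard justification is in place, both parts of the lemma fall out by identifying coefficients.
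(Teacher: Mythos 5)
Your proof is correct and follows exactly the argument the paper intends: the paper states this lemma with no written proof (``We straightforwardly have the following''), and the standard justification is precisely your generating-function manipulation --- dividing~\eqref{eq:ad} by $A(t)$ and matching coefficients for the first identity, and applying $\La$ termwise together with the eigenfunction property $\La\Ea(xt)=t\,\Ea(xt)$ and~\eqref{eq:coccc} for the second. Your remark on justifying the termwise application of $\La$ is a reasonable extra precaution, but since each $A_n$ is a polynomial and the identities are formal coefficient comparisons of power series in $t$, nothing further is needed.
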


%%%%%%%%%
\subsection{Bernoulli-Dunkl polynomials}
%%%%%%%%%

We define the Bernoulli-Dunkl polynomials $\{\B_{n,\alpha}\}_{n=0}^{\infty}$ by the generating function
\begin{equation}
\label{eq:defbd}
  \frac{\Ea(xt)}{\I_{\alpha+1}(t)}
  = \sum_{n=0}^\infty \frac{\B_{n,\alpha}(x)}{\cc_{n,\alpha}}\,t^n.
\end{equation}
To simplify the notation we sometimes write $\B_n=\B_{n,\alpha}$.

Since
\[
  \I_{\alpha+1}(x) = \sum_{n=0}^\infty \frac{x^{2n}}{\cc_{2n,\alpha+1}},
\]
the first few Bernoulli-Dunkl polynomials are
\begin{align*}
  \B_0(x) &= 1, \qquad & \B_1(x) &= x, \\
  \B_2(x) &= x^2 - \frac{\alpha+1}{\alpha+2},\qquad &
  \B_3(x) &= x^3 - x,\\
  \B_4(x) &= x^4 - 2x^2 - \frac{(\alpha+4)(\alpha+1)}{(\alpha+3)(\alpha+2)},\qquad &
  \B_5(x) &= x^5 - 2\,\frac{\alpha+3}{\alpha+2}\,x^3 + \frac{\alpha+4}{\alpha+2}\,x.
\end{align*}

\begin{theorem}
The Bernoulli-Dunkl polynomials satisfy the recurrence relations
\begin{align}
\label{eq:x2nbd}
  x^{2n} = \B_{2n}(x)+(\alpha+1) \sum_{j=0}^{n-1} \binom{2n}{2j}_{\!\alpha}
  \, \frac{\B_{2j}(x)}{\alpha+2n-2j+1},\\
\nonumber
  x^{2n+1} = \B_{2n+1}(x) + (\alpha+1) \sum_{j=0}^{n-1} \binom{2n+1}{2j+1}_{\!\alpha}
  \, \frac{\B_{2j+1}(x)}{\alpha+2n-2j+1}.
\end{align}
Moreover,
\begin{equation}
\label{eq:LaBn}
  \La(\B_n) = (n+(\alpha+1/2)(1-(-1)^n)) \B_{n-1}
\end{equation}
and
\begin{enumerate}
\item $\B_{2n}$ is an even polynomial, $n\ge 0$;
\item $\B_{2n+1}$ is an odd polynomial, $n\ge 0$, and vanishes at $1$ (and hence at~$-1$) for $n\ge 1$.
\end{enumerate}
\end{theorem}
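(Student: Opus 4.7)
The plan is to deduce every claim from the generating function \eqref{eq:defbd} combined with the general Appell-Dunkl identities collected in Lemma~\ref{lem:xn}, specialized to $A(t)=1/\I_{\alpha+1}(t)$.

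For the two recurrences in \eqref{eq:x2nbd}, I would use that $1/A(t)=\I_{\alpha+1}(t)=\sum_{k\ge 0}t^{2k}/\cc_{2k,\alpha+1}$, so the coefficients $a_n$ of Lemma~\ref{lem:xn} satisfy $a_{2k+1}=0$ and $a_{2k}=1/\cc_{2k,\alpha+1}$. The inversion formula $x^n=\cc_n\sum_{j=0}^n\B_j(x)a_{n-j}/\cc_j$ then splits according to the parity of $n$: for $n=2m$ only even-indexed $\B_{2j}$ survive, and for $n=2m+1$ only odd-indexed $\B_{2j+1}$, yielding the two desired forms. The factor $(\alpha+1)$ and the rational prefactor in the denominator are then produced by rewriting $1/\cc_{2(m-j),\alpha+1}$ in terms of $1/\cc_{2(m-j),\alpha}$ via the Pochhammer ratio $(\alpha+1)_k/(\alpha+2)_k=(\alpha+1)/(\alpha+k+1)$, after which the remaining combination assembles into the Dunkl binomial $\binom{2m}{2j}_{\!\alpha}$, respectively $\binom{2m+1}{2j+1}_{\!\alpha}$.

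The Dunkl derivative identity \eqref{eq:LaBn} is immediate from Lemma~\ref{lem:xn}: the generic Appell-Dunkl relation $\La A_n=(\cc_n/\cc_{n-1})A_{n-1}$ applied to $A_n=\B_n$, combined with \eqref{eq:coccc}, gives exactly the stated factor $n+(\alpha+1/2)(1-(-1)^n)$. For the parity, I would exploit that $\I_{\alpha+1}$ is even and that $\Ea(xt)$ depends on $t$ only through the product $xt$; substituting $t\mapsto -t$ in \eqref{eq:defbd} gives
\[
  \sum_{n=0}^\infty (-1)^n\,\frac{\B_n(x)}{\cc_n}\,t^n
  = \frac{\Ea(-xt)}{\I_{\alpha+1}(t)}
  = \sum_{n=0}^\infty \frac{\B_n(-x)}{\cc_n}\,t^n,
\]
and comparing coefficients yields $\B_n(-x)=(-1)^n\B_n(x)$, i.e., $\B_{2n}$ is even and $\B_{2n+1}$ is odd.

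Finally, for $\B_{2n+1}(1)=0$ when $n\ge 1$, I would set $x=1$ in \eqref{eq:defbd} and use the defining decomposition $\Ea(t)=\I_\alpha(t)+\bigl(t/(2(\alpha+1))\bigr)\I_{\alpha+1}(t)$ to obtain
\[
  \sum_{n=0}^\infty \frac{\B_n(1)}{\cc_n}\,t^n
  = \frac{\I_\alpha(t)}{\I_{\alpha+1}(t)} + \frac{t}{2(\alpha+1)}.
\]
The first term on the right is a quotient of two even entire functions, hence even; the second term contributes only to the coefficient of $t$ and recovers $\B_1(1)=1$ (since $\cc_1=2(\alpha+1)$). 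Consequently every odd coefficient of index $\ge 3$ on the left vanishes, i.e., $\B_{2n+1}(1)=0$ for $n\ge 1$; the vanishing at $-1$ then follows from the parity statement. The only delicate step is the bookkeeping in the first part, matching the Pochhammer ratio to the rational prefactor stated in \eqref{eq:x2nbd}; everything else reduces to formal manipulations of the generating function.
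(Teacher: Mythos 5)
Your route is the same as the paper's: the authors prove this theorem simply by invoking Lemma~\ref{lem:xn} with $A(t)=1/\I_{\alpha+1}(t)$, and your proposal just fills in the details (the inversion formula with $a_{2k+1}=0$, $a_{2k}=1/\cc_{2k,\alpha+1}$; the generic $\La A_n=(\cc_n/\cc_{n-1})A_{n-1}$ together with \eqref{eq:coccc}; parity via $t\mapsto -t$; and the vanishing at $x=1$ via the evenness of $\I_\alpha/\I_{\alpha+1}$). All of those steps are sound. The one place where you wave your hands is exactly the place that does not check out: your Pochhammer ratio
\[
  \frac{\cc_{2k,\alpha}}{\cc_{2k,\alpha+1}}=\frac{(\alpha+1)_k}{(\alpha+2)_k}=\frac{\alpha+1}{\alpha+k+1},
  \qquad k=n-j,
\]
produces the coefficient $(\alpha+1)\binom{2n}{2j}_{\!\alpha}/(\alpha+n-j+1)$, whereas the statement you are asked to prove has denominator $\alpha+2n-2j+1$. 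These are not the same, so the claim that ``the remaining combination assembles into'' the printed formula is false as written. In fact your computed denominator is the correct one: for $n=1$, $j=0$ it gives $x^2=\B_2(x)+\tfrac{\alpha+1}{\alpha+2}$ and $x^3=\B_3(x)+x$, consistent with the displayed $\B_2(x)=x^2-\tfrac{\alpha+1}{\alpha+2}$ and $\B_3(x)=x^3-x$, while the printed denominator $\alpha+2n-2j+1=\alpha+3$ would contradict both. So the discrepancy is a typo in the paper's statement rather than an error in your method, but you should have carried the bookkeeping to the end and flagged the mismatch instead of asserting agreement. The remaining parts of your argument (the $\La$ identity, parity, and $\B_{2n+1}(\pm1)=0$ for $n\ge1$) are correct and complete.
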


\begin{proof}
It is an easy consequence of Lemma~\ref{lem:xn}.
\end{proof}

\begin{remark}
\label{rem1}
One of the reasons why we call the family $\{\B_n\}_{n=0}^{\infty}$ Bernoulli-Dunkl polynomials is the following.
From its generating function one gets
\begin{equation}
\label{eq:BtoB}
  \frac{\B_{n,-1/2}(2x-1)}{2^n} = B_n(x),
\end{equation}
where $\{B_n\}_{n=0}^{\infty}$ are the Bernoulli polynomials (for the definition and properties of the Bernoulli polynomials one can see, for instance,~\cite{Dil-NIST}).

Indeed, taking into account that
\[
  E_{-1/2}(x) = e^x,\quad \I_{1/2}(x) = \frac{\sin(ix)}{ix},
\]
and replacing $x$ by $2x-1$, $t$ by $t/2$, and $\alpha$ by $-1/2$ in the definition~\eqref{eq:defbd} yields
\[
  \frac{e^{xt-t/2}}{\frac{\sin(it/2)}{it/2}}
  = \sum_{n=0}^\infty \frac{\B_{n,-1/2}(2x-1)}{2^n} \frac{t^n}{n!}.
\]
An easy computation gives
\[
  \frac{e^{xt-t/2}}{\frac{\sin(it/2)}{it/2}} = \frac{te^{xt}}{e^t-1},
\]
from where~\eqref{eq:BtoB} follows. We note that the change $x \mapsto 2x-1$ in~\eqref{eq:BtoB} is very natural in the Dunkl context, because
it is very much related to the reflection group $\ZZ_2$ (see~\eqref{eq:Duop}). For this group, the points $\pm1$ are essential, and thus the role of $x=0$ and $x=1$ for the classical Bernoulli polynomials must be translated to $-1$ and~$1$.

The other reason to call them Bernoulli-Dunkl is that they play in the sums~\eqref{eq:sum1}, involving the zeros of Bessel functions, the same role played by Bernoulli polynomials in the Euler sums~\eqref{eq:Euler}.
\end{remark}

%-----
% Dunkl translation
%-----

One might expect that Bernoulli-Dunkl polynomials would satisfy many identities and formulas corresponding to known properties of Bernoulli polynomials. Although the scope of this paper is not to look for these identities, we display here just one of them to taste their flavour.

The Dunkl translation operator of a function $f$ is defined by
\[
  \tau_y f(x) = \sum_{n=0}^\infty \frac{y^n}{\cc_{n,\alpha}}\La^{n}f(x),
  \qquad \alpha>-1,
\]
where $\La^0$ is the identity operator and $\La^{n+1} = \La(\La^n)$. For the Dunkl transform, the translation $\tau_y$ plays the same role as the classical translation for the Fourier transform (that is, $\tau_y f(x) = f(x+y)$ for the case $\alpha=-1/2$). Some properties of the translation operator, including an integral expression, can be found in \cite{Ros}, \cite{Rosler}, and~\cite{ThX}.
For our purposes, we only need the identity \cite[formula (4.2.2)]{Ros}
\[
  \tau_y(E_\alpha(t\cdot))(x) = E_\alpha(tx) E_\alpha (ty).
\]

Then, we have the following result:

\begin{theorem}
For $\alpha>-1$, the Bernoulli-Dunkl polynomials satisfy
\begin{equation}
\label{eq:tau-B}
  \tau_y(\B_k(\cdot))(x)
  = \sum_{j=0}^k \binom{k}{j}_{\!\alpha} \B_j(x) y^{k-j}.
\end{equation}
\end{theorem}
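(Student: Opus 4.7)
The plan is to bypass the defining generating function of the Bernoulli--Dunkl polynomials entirely and argue directly from the structural formula \eqref{eq:LaBn}, combined with the ratio identity \eqref{eq:coccc}, which together read $\La \B_n = \frac{\cc_n}{\cc_{n-1}} \B_{n-1}$. The key observation is that this recursion telescopes cleanly under iteration, and the polynomiality of $\B_k$ automatically truncates the infinite series defining $\tau_y$. Thus the whole argument reduces to bookkeeping once the right iteration is written down.

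More concretely, iterating the identity $m$ times gives
\[
  \La^m \B_k = \frac{\cc_k}{\cc_{k-m}} \, \B_{k-m}
  \quad \text{for } 0 \le m \le k,
\]
while $\La^m \B_k = 0$ for $m > k$ since $\B_k$ is a polynomial of degree~$k$. Substituting this into the definition of the Dunkl translation yields a finite sum,
\[
  \tau_y \B_k(x)
  = \sum_{m=0}^{\infty} \frac{y^m}{\cc_m} \La^m \B_k(x)
  = \sum_{m=0}^{k} \frac{\cc_k}{\cc_m \cc_{k-m}} \, \B_{k-m}(x) \, y^m.
\]
Reindexing $j = k - m$ and recognizing $\cc_k/(\cc_j \cc_{k-j}) = \binom{k}{j}_{\!\alpha}$ gives exactly \eqref{eq:tau-B}. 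No convergence issue arises, because the defining series for $\tau_y \B_k$ is finite at the outset.

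The only possible obstacle is an analytic one, and it is only apparent: one might worry about the legitimacy of the term-by-term action of $\La$ used to derive $\La^m \B_k$, but this is immediate because $\B_{k-m}$ is a polynomial and $\La$ preserves polynomials. As a sanity check, one could alternatively apply $\tau_y$ (in the $x$ variable) to the generating function \eqref{eq:defbd}, use the given identity $\tau_y(\Ea(t\cdot))(x) = \Ea(tx) \Ea(ty)$ to get
\[
  \frac{\Ea(xt)\,\Ea(yt)}{\I_{\alpha+1}(t)}
  = \sum_{n=0}^{\infty} \frac{\tau_y \B_n(x)}{\cc_n} \, t^n,
\]
expand the left side as a Cauchy product of $\sum_j \frac{\B_j(x)}{\cc_j} t^j$ and $\sum_m \frac{y^m}{\cc_m} t^m$, and compare coefficients of $t^n$. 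This reproduces the same formula and confirms that the result is genuinely the ``Dunkl binomial'' analogue of $B_k(x+y) = \sum_j \binom{k}{j} B_j(x) y^{k-j}$.
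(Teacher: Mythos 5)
Your argument is correct, and it takes a genuinely different route from the paper. The paper works entirely at the level of generating functions: it applies $\tau_y$ to \eqref{eq:defbd}, invokes the kernel identity $\tau_y(\Ea(t\cdot))(x)=\Ea(tx)\Ea(ty)$ from Rosenblum, expands the product as a Cauchy product and compares coefficients of $t^k$ --- essentially the computation you relegate to your ``sanity check.'' You instead argue directly from the series definition of $\tau_y$ together with $\La\B_n=\frac{\cc_n}{\cc_{n-1}}\B_{n-1}$, telescoping to $\La^m\B_k=\frac{\cc_k}{\cc_{k-m}}\B_{k-m}$ and noting that $\La^{k+1}\B_k=\La(\cc_k\B_0)=0$ kills all higher terms. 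What your approach buys is self-containedness and the absence of any analytic issue: the sum defining $\tau_y\B_k$ is finite, you never need the product formula for the Dunkl kernel, and you never need to justify interchanging $\tau_y$ with an infinite sum in $t$ (a point the paper's proof passes over silently). What the paper's approach buys is that it exhibits the identity as the exact Appell-Dunkl analogue of the classical translation formula, with the kernel identity playing the role of $e^{(x+y)t}=e^{xt}e^{yt}$, and it generalizes verbatim to any Appell-Dunkl family. One minor remark: your justification ``$\La^m\B_k=0$ for $m>k$ since $\B_k$ has degree $k$'' is fine for $\alpha>-1$ (where $\La$ lowers polynomial degree by exactly one), but the cleaner and parameter-free reason is the one already contained in your telescoped formula, namely that $\La^k\B_k$ is the constant $\cc_k$ and $\La$ annihilates constants.
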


\begin{proof}
We prove that a certain generating function is equal for both sides of \eqref{eq:tau-B}. Indeed,
\begin{align*}
  \sum_{k=0}^\infty \frac{\tau_y(\B_k(\cdot))(x)}{\cc_{k}} \, t^k
  &= \frac{1}{\I_{\alpha+1}(t)} \, \tau_y(E_{\alpha}(t\cdot))(x)
  = \frac{E_\alpha(tx)}{\I_{\alpha+1}(t)} \, E_\alpha(ty) \\
  &= \left(\sum_{k=0}^\infty \frac{\B_k(x)}{\cc_{k}} \, t^k\right)
  \left(\sum_{k=0}^\infty \frac{t^k y^k}{\cc_{k}}\right)
  = \sum_{k=0}^\infty t^k \left(\sum_{j=0}^k
  \frac{\B_j(x)}{\cc_{j}} \frac{y^{k-j}}{\cc_{k-j}}\right) \\
  &= \sum_{k=0}^\infty \frac{t^k}{\cc_{k}}
  \left(\sum_{j=0}^k \binom{k}{j}_{\!\alpha} \B_j(x)y^{k-j}\right).
\qedhere
\end{align*}
\end{proof}

In the classical case $\alpha=-1/2$, \eqref{eq:tau-B} becomes the well known translation formula
\[
  B_k(x+y) = \sum_{j=0}^k \binom{k}{j} B_j(x)y^{k-j}
\]
for Bernoulli polynomials.

%%%%%%%%%
\subsection{Apostol-Euler-Dunkl polynomials}
%%%%%%%%%

We define the Apostol-Euler-Dunkl polynomials $\{\E_{n,\alpha,u}\}_{n=0}^{\infty}$ by the generating function
\begin{equation}
\label{eq:defaed}
  \frac{u\I_{\alpha+1}(u)\Ea(xt)}{(t+u)\I_{\alpha+1}(t+u)}
 = \sum_{n=0}^\infty \frac{\E_{n,\alpha,u}(x)}{\cc_{n,\alpha}} \,t^n,
\end{equation}
where $u$ is a complex number which is neither $0$ nor a root of $\I_{\alpha+1}$.
To simplify the notation we sometimes write $\E_n=\E_{n,\alpha,u}$.

\begin{theorem}
The Apostol-Euler-Dunkl polynomials satisfy the recurrence relation
\begin{equation}
\label{eq:xned}
  x^{n} = \frac{2(1+\alpha)\cc_n}{u\I_{\alpha+1}(u)}
    \sum_{j=0}^{n} \frac{\E_{j}(x)}{\cc_j(n-j)!} \, \I_{\alpha}^{(n-j+1)}(u).
\end{equation}
Moreover,
\[
  \La(\E_n) = (n+(\alpha+1/2)(1-(-1)^n))\E_{n-1}.
\]
\end{theorem}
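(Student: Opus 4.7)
The plan is to derive both identities as direct consequences of Lemma~\ref{lem:xn} applied to the particular Appell-Dunkl sequence $\{\E_n\}$. The second identity, $\La(\E_n)=(n+(\alpha+1/2)(1-(-1)^n))\E_{n-1}$, requires no work beyond observing that the Apostol-Euler-Dunkl polynomials are an Appell-Dunkl sequence with generating function~\eqref{eq:defaed}, so the general formula at the end of Lemma~\ref{lem:xn} specializes to this statement.

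For the recurrence~\eqref{eq:xned}, I would apply the first part of Lemma~\ref{lem:xn} to
\[
  A(t) = \frac{u\I_{\alpha+1}(u)}{(t+u)\I_{\alpha+1}(t+u)},
\]
so that
\[
  \frac{1}{A(t)} = \frac{(t+u)\I_{\alpha+1}(t+u)}{u\I_{\alpha+1}(u)}.
\]
The task thus reduces to identifying the Taylor coefficients $a_n$ of $1/A(t)$ at $t=0$.

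The key step is the differentiation identity $\I_{\alpha}'(z) = \frac{z}{2(\alpha+1)}\I_{\alpha+1}(z)$, which follows by differentiating the power series
\[
  \I_\alpha(z) = \Gamma(\alpha+1) \sum_{n=0}^\infty \frac{(z/2)^{2n}}{n!\,\Gamma(n+\alpha+1)}
\]
term by term and reindexing. Equivalently, $z\I_{\alpha+1}(z) = 2(\alpha+1)\I_\alpha'(z)$, so with $z=t+u$ we obtain
\[
  \frac{1}{A(t)} = \frac{2(\alpha+1)\,\I_\alpha'(t+u)}{u\I_{\alpha+1}(u)}
  = \frac{2(\alpha+1)}{u\I_{\alpha+1}(u)}\sum_{k=0}^\infty \frac{\I_\alpha^{(k+1)}(u)}{k!}\,t^k.
\]
Reading off $a_{n-j}$ and substituting into the first formula of Lemma~\ref{lem:xn} yields exactly~\eqref{eq:xned}.

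There is no real obstacle here; the only nonroutine ingredient is recognizing $\I_\alpha' = \frac{z}{2(\alpha+1)}\I_{\alpha+1}$, which converts the somewhat awkward factor $(t+u)\I_{\alpha+1}(t+u)$ into the cleaner derivative $2(\alpha+1)\I_\alpha'(t+u)$ and makes the Taylor expansion transparent.
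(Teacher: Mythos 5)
Your proposal is correct and follows exactly the paper's own route: the paper proves this theorem in one line by invoking Lemma~\ref{lem:xn} together with the identity $2(1+\alpha)\I_\alpha'(x) = x\I_{\alpha+1}(x)$, which is precisely the argument you spell out. Your expansion of $1/A(t)$ as $\frac{2(\alpha+1)}{u\I_{\alpha+1}(u)}\sum_{k\ge 0}\frac{\I_\alpha^{(k+1)}(u)}{k!}t^k$ and the resulting identification of the coefficients $a_{n-j}$ are accurate.
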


\begin{proof}
It is an easy consequence of Lemma~\ref{lem:xn}, taking into account that $2(1+\alpha)\I_\alpha'(x) = x\I_{\alpha+1}(x)$.
\end{proof}

The computation of Apostol-Euler-Dunkl polynomials can be simplified using the following lemma.

\begin{lemma}
\label{lem:der}
Given $\alpha\in\CC \setminus \{-1,-2,\dots \}$ and a nonnegative integer $k$, we have
\begin{equation}
\label{eq:der}
  \I_\alpha^{(k)}(z) = \I_\alpha(z) P_k(z) + \I_{\alpha+1}(z) Q_k(z),
\end{equation}
where $P_k(z)$ and $Q_k(z)$ are rational functions satisfying
$P_0(z)=1$, $Q_0(z)=0$, and the recurrence relations
\[
  P_{k+1}(z) = P_{k}'(z) + \frac{2(\alpha+1)}{z}\,Q_k(z)
\]
and
\[
  Q_{k+1}(z) = Q_k'(z) + \frac{z}{2(\alpha+1)}\,P_k(z) - \frac{2(\alpha+1)}{z}\,Q_k(z).
\]
\end{lemma}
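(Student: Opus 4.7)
The plan is to proceed by induction on $k$. The base case $k=0$ is immediate from the initial data $P_0=1$, $Q_0=0$, since $\I_\alpha(z)=\I_\alpha(z)\cdot 1+\I_{\alpha+1}(z)\cdot 0$.

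For the inductive step I need two derivative identities. The first, already used in the proof of Theorem~2.5, is
\[
  \I_\alpha'(z)=\frac{z}{2(\alpha+1)}\,\I_{\alpha+1}(z),
\]
which follows from differentiating the power series $\I_\alpha(z)=\sum_{n\ge 0}(z/2)^{2n}/(n!\,(\alpha+1)_n)$ term by term and reindexing. The second identity I would establish is
\[
  \I_{\alpha+1}'(z)=\frac{2(\alpha+1)}{z}\bigl(\I_\alpha(z)-\I_{\alpha+1}(z)\bigr).
\]
This can be obtained directly from the power series (shift $\alpha\to\alpha+1$ in the first identity to get $\I_{\alpha+1}'(z)=\frac{z}{2(\alpha+2)}\I_{\alpha+2}(z)$ and then eliminate $\I_{\alpha+2}$ via the three-term recurrence for ${}_0F_1$), or, more elegantly in the present setting, from the invariance $\La E_\alpha=E_\alpha$: writing $E_\alpha=\I_\alpha+\frac{z}{2(\alpha+1)}\I_{\alpha+1}$, noting that $\I_{\alpha+1}$ is even and $\frac{z}{2(\alpha+1)}\I_{\alpha+1}$ is odd, a short computation with the definition \eqref{eq:Duop} of $\La$ yields exactly the identity above.

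Given these two identities, the inductive step is a direct computation. Assuming $\I_\alpha^{(k)}=\I_\alpha P_k+\I_{\alpha+1}Q_k$, one differentiates and groups terms:
\begin{align*}
  \I_\alpha^{(k+1)}(z)
  &=\I_\alpha'(z)P_k(z)+\I_\alpha(z)P_k'(z)+\I_{\alpha+1}'(z)Q_k(z)+\I_{\alpha+1}(z)Q_k'(z)\\
  &=\I_\alpha(z)\!\left[P_k'(z)+\tfrac{2(\alpha+1)}{z}Q_k(z)\right]
   +\I_{\alpha+1}(z)\!\left[Q_k'(z)+\tfrac{z}{2(\alpha+1)}P_k(z)-\tfrac{2(\alpha+1)}{z}Q_k(z)\right],
\end{align*}
which is exactly the desired form with $P_{k+1}$ and $Q_{k+1}$ as stated. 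Since the right-hand sides of the recurrences are rational in $z$ whenever $P_k,Q_k$ are, the rationality of $P_k,Q_k$ is preserved at each step.

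The only subtle point is the derivation of the formula for $\I_{\alpha+1}'(z)$; once that is in hand, the remainder is bookkeeping. I would therefore devote the bulk of the written proof to clearly displaying the two derivative identities, and then carry out the one-line inductive calculation above.
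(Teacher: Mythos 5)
Your proof is correct and follows essentially the same route as the paper's: induction on $k$ using the two derivative identities $\I_\alpha'(z)=\frac{z}{2(\alpha+1)}\I_{\alpha+1}(z)$ and $\I_{\alpha+1}'(z)=\frac{2(\alpha+1)}{z}\bigl(\I_\alpha(z)-\I_{\alpha+1}(z)\bigr)$, followed by the same grouping of terms. The only (immaterial) difference is that the paper derives these identities from the classical recurrences for the modified Bessel function $I_\alpha$, whereas you propose the power series or the invariance $\La E_\alpha=E_\alpha$; all of these are valid.
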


\begin{proof}
The result is obvious for $k=0$. Let us assume that the result is true for some $k$ and prove it for $k+1$.

Let us write $\I_\alpha(z) = 2^\alpha \Gamma(\alpha+1){I_{\alpha}(z)}/{z^{\alpha}}$, where $I_\alpha$ is the modified Bessel function of the first kind and order~$\alpha$. We will use the identities (see, for instance, \cite[formulas 10.29.2]{OlMa-NIST})
% \url{http://dlmf.nist.gov/10.29}
\begin{equation}
\label{eq:Ider+}
  I_\alpha'(z) = I_{\alpha+1}(z) + \frac{\alpha}{z}\,I_\alpha (z)
\end{equation}
and
\begin{equation}
\label{eq:Ider-}
  I_\alpha'(z) = I_{\alpha-1}(z) - \frac{\alpha}{z}\,I_\alpha (z).
\end{equation}
By \eqref{eq:Ider+} we have
\[
  \I_\alpha'(z) = 2^\alpha \Gamma(\alpha+1) \left(\frac{I_\alpha'(z)}{z^\alpha}
  - \alpha \, \frac{I_\alpha(z)}{z^{\alpha+1}}\right)
  = \frac{z}{2(\alpha+1)} \, \I_{\alpha+1}(z),
\]
and by \eqref{eq:Ider-} (with $\alpha+1$ instead of~$\alpha$) we deduce that
\begin{align*}
  \I_{\alpha+1}'(z) &= 2^{\alpha+1} \Gamma(\alpha+2) \left(\frac{I_{\alpha+1}'(z)}{z^{\alpha+1}}
  - (\alpha+1)\frac{I_{\alpha+1}(z)}{z^{\alpha+2}}\right) \\
  &= 2^{\alpha+1} \Gamma(\alpha+2) \left(\frac{I_{\alpha}(z)}{z^{\alpha+1}}
  - 2(\alpha+1)\frac{I_{\alpha+1}(z)}{z^{\alpha+2}}\right) \\
  &= \frac{2(\alpha+1)}{z}(\I_{\alpha}(z)-\I_{\alpha+1}(z)).
\end{align*}
Then,
\begin{align*}
  \I_{\alpha}^{(k+1)}(z) &= \I_\alpha'(z)P_k(z) + \I_\alpha(z)P_k'(z)
  + \I_{\alpha+1}'(z)Q_k(z) + \I_{\alpha+1}(z)Q_k'(z) \\
  &= \frac{z}{2(\alpha+1)} \, \I_{\alpha+1}(z)P_k(z)+\I_\alpha(z)P_k'(z) \\*
  &\qquad\qquad+ \frac{2(\alpha+1)}{z}(\I_{\alpha}(z) - \I_{\alpha+1}(z))Q_k(z)
  + \I_{\alpha+1}(z)Q_k'(z) \\
  &= \I_\alpha(z) \left(P_k'(z)+\frac{2(\alpha+1)}{z}Q_k(z)\right) \\*
  &\qquad\qquad+ \I_{\alpha+1}(z) \left(Q_k'(z) + \frac{z}{2(\alpha+1)}\,P_k(z) - \frac{2(\alpha+1)}{z}\,Q_k(z)\right)
\end{align*}
and \eqref{eq:der} follows.
\end{proof}

The identities in the previous lemma become much simpler when $z = ui$ and $u$ is a zero of the Bessel function $J_\alpha$ or $J_{\alpha+1}$. Indeed,
if $s_{n,\alpha}$ is a non-null zero of $J_{\alpha+1}(x)$ and (as usual in the literature) $j_{n,\alpha}=s_{n,\alpha-1}$, then:

\begin{corollary}
\label{cor:cmn}
Given $\alpha\in\CC \setminus \{-1,-2,\cdots \}$ and a nonnegative integer $k$, we have
\[
  \I_\alpha^{(k)}(is_l) = \I_\alpha(is_l) P_k(is_l)
\]
where the first values for $P_k(z)$  are
$P_0(z)=1$, $P_1(z)=0$, $P_2(z)=1$,
\[
  P_3(z) = -\frac{2\alpha+1}{z},
  \qquad \text{ and }
  \qquad P_4(z) = \frac{(2\alpha+1)(2\alpha+3)}{z^2}+1.
\]
Moreover,
\[
  \I_\alpha^{(k)}(ij_l) = \I_{\alpha+1}(ij_l) Q_k(ij_l)
\]
where the first values for  $Q_k(z)$ are
$Q_0(z)=0$, $\displaystyle Q_1(z)=\frac{z}{2(\alpha+1)}$,
\begin{align*}
  Q_2(z) &= -\frac{2\alpha+1}{2(\alpha+1)},
  \qquad Q_3(z) = \frac{z}{2(\alpha+1)}+\frac{(2\alpha+1)}{z},
  \qquad \text{ and }\\
  Q_4(z) &= -(1+2\alpha)\left(\frac{1}{1+\alpha}+\frac{3+2\alpha}{z^2}\right).
\end{align*}
\end{corollary}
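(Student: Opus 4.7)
The plan is to read the corollary as an immediate specialization of Lemma~\ref{lem:der} at the relevant imaginary arguments, followed by a routine application of the recurrences stated there to tabulate $P_k$ and $Q_k$ for small~$k$.

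First, I would recall that the identity $\I_{\alpha+1}(z) = 2^{\alpha+1}\Gamma(\alpha+2)J_{\alpha+1}(iz)/(iz)^{\alpha+1}$ implies that $\I_{\alpha+1}(is_l) = 0$ precisely when $s_l$ is a non-null zero of $J_{\alpha+1}$, and similarly $\I_\alpha(ij_l) = 0$ when $j_l = j_{l,\alpha}$ is a zero of $J_\alpha$. Substituting $z = is_l$ into the decomposition $\I_\alpha^{(k)}(z) = \I_\alpha(z)P_k(z) + \I_{\alpha+1}(z)Q_k(z)$ from Lemma~\ref{lem:der} kills the second summand, yielding $\I_\alpha^{(k)}(is_l) = \I_\alpha(is_l)P_k(is_l)$. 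Substituting $z = ij_l$ instead kills the first summand, giving $\I_\alpha^{(k)}(ij_l) = \I_{\alpha+1}(ij_l)Q_k(ij_l)$. This is the full content of the two vanishing assertions.

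It remains to verify the explicit formulas for $P_0, \dots, P_4$ and $Q_0, \dots, Q_4$. I would do this by induction on~$k$ using the recurrences
\[
  P_{k+1}(z) = P_k'(z) + \frac{2(\alpha+1)}{z}\,Q_k(z), \qquad
  Q_{k+1}(z) = Q_k'(z) + \frac{z}{2(\alpha+1)}\,P_k(z) - \frac{2(\alpha+1)}{z}\,Q_k(z),
\]
starting from $P_0 = 1$ and $Q_0 = 0$. Each step is one differentiation and a couple of algebraic combinations: for instance, $P_1 = 0$ and $Q_1 = z/(2(\alpha+1))$ follow immediately, then $P_2 = (2(\alpha+1)/z)\cdot Q_1 = 1$ and $Q_2 = 1/(2(\alpha+1)) - 1 = -(2\alpha+1)/(2(\alpha+1))$, and so on up to $k=4$.

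There is no real obstacle: the lemma already does the work of separating the $\I_\alpha$ and $\I_{\alpha+1}$ components, and the corollary is just its evaluation at zeros of one of these two functions. The only thing to be careful about is bookkeeping in the algebraic simplifications (in particular collecting the $(2\alpha+1)(2\alpha+3)/z^2$ term that appears in $P_4$ and $Q_4$); I would double-check these by re-expanding the recurrence for $Q_4$ and confirming it factors as $-(1+2\alpha)\bigl(1/(1+\alpha) + (3+2\alpha)/z^2\bigr)$.
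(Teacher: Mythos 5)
Your proposal is correct and matches the paper's (implicit) argument exactly: the paper offers no separate proof, presenting the corollary as the evaluation of the decomposition in Lemma~\ref{lem:der} at $z=is_l$ (where $\I_{\alpha+1}$ vanishes) and $z=ij_l$ (where $\I_\alpha$ vanishes), with the tabulated $P_k$, $Q_k$ obtained from the stated recurrences. Your explicit values for $P_0,\dots,P_4$ and $Q_0,\dots,Q_4$ all check out against those recurrences.
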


This gives the first few Apostol-Euler-Dunkl polynomials $\E_{n,\alpha,ij_l}$ as follows:
\begin{align*}
  \E_{0,\alpha,ij_l}(x) &= 1, \\
  \E_{1,\alpha,ij_l}(x) &= x + \frac{2(1+\alpha)(1+2\alpha)}{ij_l}, \\
  \E_{2,\alpha,ij_l}(x) &= x^2 + \frac{2(1+2\alpha)}{i j_l} \, x
    - 2(1+\alpha)\left(1+\frac{2\alpha(1+2\alpha)}{j_l^2}\right), \\
  \E_{3,\alpha,ij_l}(x) &= x^3 + \frac{2(2+\alpha)(1+2\alpha)}{i j_l} \,x^2
    - 2(2+\alpha)\left(1+\frac{2\alpha(1+2\alpha)}{j_l^2}\right) x \\
  &\qquad - \frac{8(2+\alpha)(1+\alpha)(1+2\alpha)}{3ij_l}
    \left(2+\frac{\alpha(-1+2\alpha)}{j_l^2}\right).
\end{align*}

The Apostol-Euler-Dunkl polynomials can be considered as a generalization of the Apostol-Euler
polynomials $\{\mathcal{E}_n(x;\lambda)\}_{n=0}^\infty$ defined by the generating function
\[
\frac{2}{\lambda e^t+1}\,e^{xt}
= \sum_{n=0}^\infty \mathcal{E}_n(x;\lambda) \frac{t^n}{n!}
\]
(see, for instance, \cite{NRV}). Indeed, one can recover the Apostol-Euler polynomials replacing $x$ by $2x-1$ and $t$ by $t/2$, and taking $\alpha=-1/2$ and $\lambda=-e^{2u}$ in the definition~\eqref{eq:defaed} of the Apostol-Euler-Dunkl polynomials:
\begin{equation}
\label{eq:EtoE}
  \frac{\E_{n,-1/2,\log(-\lambda)/2}(2x-1)}{2^{n-1}(\lambda+1)} = \mathcal{E}_n(x;\lambda).
\end{equation}
We omit the computation of~\eqref{eq:EtoE} because it is similar to the one in Remark~\ref{rem1} in the previous section.
(We do not consider any Dunkl version of the Apostol-Bernoulli polynomials \cite{Ap} because they are very close relatives of the Apostol-Euler polynomials, see \cite[Lemma~2]{NRV}.)

Taking $\lambda=1$ (or $u=i\pi/2$) in~\eqref{eq:EtoE} gives
\[
  \frac{\E_{n,-1/2,i\pi/2}(2x-1)}{2^n} = E_{n}(x),
\]
where $\{E_{n}\}_{n=0}^{\infty}$ are the Euler polynomials.

Bernoulli-Dunkl polynomials can be obtained from the Apostol-Euler-Dunkl polynomials by taking limit when $u$ goes to~$0$. Indeed, the generating function~\eqref{eq:defaed} can be written as
\[
\frac{\I_{\alpha+1}(u)\Ea(tx)}{\I_{\alpha+1}(t+u)}
= \left(1+\frac{t}{u}\right) \sum_{n=0}^\infty \frac{\E_{n,\alpha,u}(x)}{\cc_n} \,t^n.
\]
An easy computation gives
\[
  \frac{\I_{\alpha+1}(u)\Ea(tx)}{\I_{\alpha+1}(t+u)}
  = 1 + \sum_{n=1}^\infty \left(\frac{\E_{n,\alpha,u}(x)}{\cc_n}
  + \frac{\E_{n-1,\alpha,u}(x)}{u\cc_{n-1}}\right)t^n.
\]
Taking limit when $u$ goes to $0$ and using the generating function~\eqref{eq:defbd} proves that
\[
  \B_{n,\alpha}(x) = \lim_{u\to 0} \left(\E_{n,\alpha,u}(x)
  + \frac{\cc_n \E_{n-1,\alpha,u}(x)}{u\cc_{n-1}}\right).
\]

%%%%%%%%%
\subsection{Calogero-Dunkl numbers}
%%%%%%%%%

We finally introduce what we have called Calogero-Dunkl numbers; later in this paper, we will use them to sum the Calogero type series~\eqref{eq:calggi} that appear in~\cite{Cal}. The Calogero-Dunkl numbers $\{\am_{n,\alpha,u}\}_{n=0}^\infty$ are defined by the generating function
\begin{equation}
\label{eq:mamoneon}
  \frac{\I_\alpha(t+u)}{(t+u)\I_{\alpha+1}(t+u)} = \sum_{n=0}^\infty \am_{n,\alpha,u}t^n,
\end{equation}
where the parameter $u$ is neither $0$ nor a zero of the entire function $\I_{\alpha+1}$.
This gives the following recurrence for the Calogero-Dunkl numbers:
\begin{equation}
\label{eq:mamoneon2}
  \I_\alpha^{(n)}(u) = 2(1+\alpha)n! \sum_{j=0}^n \am_{j,\alpha,u} \frac{\I_\alpha^{(n-j+1)}(u)}{(n-j)!},
\end{equation}
with $\am_{0,\alpha,u}=\I_\alpha(u)/(u\I_{\alpha+1}(u))$.

Lemma~\ref{lem:der} can be used to compute the Calogero-Dunkl numbers. In particular, from Corollary~\ref{cor:cmn}
we get the first few Calogero-Dunkl numbers~$\am_{n,\alpha,ij_l}$:
\begin{align}
\label{eq:mis1}
  \am_{0,\alpha,ij_l} &=0,
  \quad &\am_{1,\alpha,ij_l} &= \frac{1}{2(1+\alpha)},\\
\label{eq:mis2}
  \am_{2,\alpha,ij_l} &= \frac{1+2\alpha}{4(1+\alpha)ij_l},
  \quad &\am_{3,\alpha,ij_l} &= -\frac{1}{6(1+\alpha)}+\frac{1-4\alpha^2}{12(1+\alpha)j_l^2}.
\end{align}

The Calogero-Dunkl numbers can be used to define the associated Calogero-Dunkl polynomials by setting
\[
  A(t)=\frac{(t+u)\I_{\alpha+1}(t+u)}{\I_\alpha (t+u)}
\]
in the definition~\eqref{eq:ad} of the Appell-Dunkl polynomials (but we do not study these polynomials here).

%%%%%%%%%
\section{Partial fraction decomposition for the Bessel functions}
\label{sec:simplefr}
%%%%%%%%%

We will use the polynomials defined in the previous section to sum some series involving the zeros of Bessel functions (see Section~\ref{sec:serieszeros}). To this end we need a couple of partial fraction decompositions for the Bessel functions.

The first one is the following. For each complex number $\alpha$, except for the negative integers,
\begin{equation}
\label{eq:pfd0}
   \frac{J_\alpha(t)}{J_{\alpha+1}(t)}
   = \frac{2(\alpha+1)}{t} + \sum_{j=1}^{\infty} \frac{2t}{t^2-s_j^2},
\end{equation}
where the series converges uniformly on compact sets of $\CC \setminus \{\pm s_1,\pm s_2,\dots\}$. The proof is essentially the same as for the well-known partial fraction decomposition of $\frac{J_{\alpha+1}(t)}{J_\alpha(t)}$
(see~\cite[\S\,15.41, p.~497--498]{Wat}). The expansion~\eqref{eq:pfd0} can be rewritten as
\begin{equation}
\label{eq:pfd1}
   \frac{\I_\alpha(it)}{t\I_{\alpha+1}(it)}
   = \frac{1}{t} +
   \frac{1}{2(\alpha+1)}
   \sum_{j\in \ZZO} \frac{1}{t-s_j}.
\end{equation}
Let us recall here that throughout this paper any doubly infinite series of the form $\sum_{j\in \ZZ}$  must be understood as the principal value, that is, the limit of $\sum_{|j|\leq N}$ as $N$ goes to infinity, and hence the series in \eqref{eq:pfd1} converges uniformly on compact sets of
$\CC \setminus \{\pm s_1,\pm s_2,\dots\}$.

We will also need a partial fraction decomposition for $\frac{1}{t\I_{\alpha+1}(it)}$.
The same argument of~\cite[\S\,15.41, p.~497--498]{Wat} produces this partial fraction decomposition. The proof goes as follows: take a large rectangle $D=D(A,B)$ with vertices $\pm A \pm i B$, where $A$ and $B$ are positive, and consider
\[
   \frac{1}{2\pi i} \int_{\partial D}
   \frac{1}{(w-t)w \I_{\alpha+1}(iw)} \, dw,
\]
$\partial D$ meaning the border of $D$.
The poles of $\frac{1}{(w-t)w \I_{\alpha+1}(iw)}$ in $D$, all of them simple, are $t$, $0$, and those $s_j \in D$. The residue at $t$ is obviously
\[
   \frac{1}{t \I_{\alpha+1}(it)};
\]
the residue at $0$ is
\[
   - \frac{1}{t \I_{\alpha+1}(0)} = - \frac{1}{t}.
\]
Finally, the residue at each $s_j \in D$ is
\[
   \lim_{w \to s_j} \frac{w-s_j}{(w-t)w \I_{\alpha+1}(iw)}
   = \frac{1}{(s_j - t) s_j \I_{\alpha+1}'(is_j)}
   = \frac{1}{(s_j - t) 2(\alpha+1) \I_{\alpha}(is_j)}.
\]
Thus, the calculus of residues gives
\begin{equation}
\label{eq:residuo}
   \frac{1}{2\pi i} \int_{\partial D}
   \frac{1}{(w-t)w \I_{\alpha+1}(iw)} \, dw
   = \frac{1}{t \I_{\alpha+1}(it)} - \frac{1}{t}
   + \sum_{s_j \in D}
   \frac{1}{(s_j - t) 2(\alpha+1) \I_{\alpha}(is_j)}.
\end{equation}
Using arguments similar to those of~\cite[\S\,15.41, p.~498]{Wat}, the values of $A$ and $B$ can be chosen arbitrarily large and such that
\begin{equation}
\label{eq:cotainf}
   |J_{\alpha+1}(w)| \geq C |w|^{-1/2}
\end{equation}
on $\partial D$ for some constant $C > 0$ independent of $A$ and~$B$.
We sketch here the proof for the sake of completeness. The starting point is the equality
$2J_{\alpha+1}(w) = H^{(1)}_{\alpha+1}(w) + H^{(2)}_{\alpha+1}(w)$, where the Bessel functions of the third kind satisfy the estimates
\begin{align}
\label{ec:estH1}
   H_{\alpha+1}^{(1)}(w) &= \left(\frac{2}{\pi w}\right)^{1/2}
   e^{i(w-\frac{1}{2}(\alpha+1)\pi - \frac{1}{4})}
   \{ 1 + \eta_{1,\alpha+1}(w)\},
   \\
\label{ec:estH2}
   H_{\alpha+1}^{(2)}(w) &= \left(\frac{2}{\pi w}\right)^{1/2}
   e^{-i(w-\frac{1}{2}(\alpha+1)\pi - \frac{1}{4})}
   \{ 1 + \eta_{2,\alpha+1}(w)\},
\end{align}
$\eta_{1,\alpha+1}(w)$ and $\eta_{2,\alpha+1}(w)$ being $\mathcal{O}(1/w)$ for large $|w|$ \cite[\S\,15.4, p.~496]{Wat}. Then, outside the horizontal strip $|\Im(w-\frac{1}{2}(\alpha+1)\pi - \frac{1}{4})| \leq \log 2$ we have either $|e^{i(w-\frac{1}{2}(\alpha+1)\pi - \frac{1}{4})}| < \frac{1}{2}$ or $|e^{i(w-\frac{1}{2}(\alpha+1)\pi - \frac{1}{4})}| > 2$, so that either $H_{\alpha+1}^{(1)}$ or $H_{\alpha+1}^{(2)}$ dominates the other and~\eqref{eq:cotainf} follows. The whole $\partial D$ is thus covered, except for two vertical lines of length $2\log 2$ with $\Re w = \pm A$. On these two pieces, according to~\eqref{ec:estH1} and~\eqref{ec:estH2} the problem reduces essentially to get a lower estimate for
$|\cos (w-\frac{1}{2}(\alpha+1)\pi - \frac{1}{4})|$,
which can be done by simply chosing $A$ so that to avoid the zeros of the cosine function.

Since $|w|^\alpha \leq C|w|^{\Re \alpha}$, it follows from~\eqref{eq:cotainf} that
\[
   \sup_{w \in \partial D} \left|
   \frac{1}{w \I_{\alpha+1}(iw)} \right|
   \leq C^{-1} |w|^{\Re\alpha+1/2}.
\]

Then, the left-hand side of~\eqref{eq:residuo} goes to $0$ as $A$ and $B$ go to infinity, provided that $\Re\alpha + \frac{1}{2} < 0$, and this proves the partial fraction decomposition
\[
   \frac{1}{t \I_{\alpha+1}(it)} = \frac{1}{t}
   + \frac{1}{2(\alpha+1)}
   \sum_{j \in \ZZO}
   \frac{1}{(t - s_j) \I_{\alpha}(is_j)},
   \qquad \text{for } \Re\alpha + \frac{1}{2} < 0.
\]
If we consider, for a fixed positive integer $m$, the integral
\[
   \frac{1}{2\pi i} \int_{\partial D}
   \frac{1}{(w-t)^{m+1} w \I_{\alpha+1}(iw)} \, dw,
\]
then the same arguments work with the only difference that the integrand now has a pole of order $m+1$ at $t$, so the residue at $t$ is
\[
   \frac{1}{m!} \left.\frac{d^m}{d w^m} \left(
   \frac{1}{w \I_{\alpha+1}(iw)} \right) \right|_{w=t}.
\]
The residues at $0$ and those those $s_j \in D$ are, respectively,
\[
    \frac{1}{(-t)^{m+1}}
\]
and
\[
   \frac{1}{(s_j - t)^{m+1} 2(\alpha+1) \I_{\alpha}(is_j)}.
\]
This gives the expansion
\begin{equation}
\label{eq:pfd2}
 \left(
   \frac{1}{t \I_{\alpha+1}(it)} \right) ^{(m)}
   = \frac{(-1)^m m!}{t^{m+1}}
   + \frac{(-1)^m m!}{2(\alpha+1)}
   \sum_{j \in \ZZO}
   \frac{1}{(t - s_j)^{m+1} \I_{\alpha}(is_j)},
\end{equation}
which is valid if $\Re\alpha - m + \frac{1}{2} < 0$.

%%%%%%%%%
\section{Sums involving the zeros of the Bessel functions}
\label{sec:serieszeros}
%%%%%%%%%

In this section, we will use the Bernoulli-Dunkl, Apostol-Euler-Dunkl polynomials and the Calogero-Dunkl numbers to sum a bunch of series involving the zeros of Bessel functions.

For any complex value of $\alpha$ (except for the negative integers), we can order the zeros $s_{j,\alpha}$ of the Bessel function $J_{\alpha+1}(x)/x^{\alpha+1}$ so that $s_{j,\alpha}=-s_{-j,\alpha}$ and $0<\Re s_{j,\alpha}\le \Re s_{j+1,\alpha}$, $j\ge 0$ (\cite[\S\,15.41, p.~497]{Wat}). To simplify the notation, we write $s_j=s_{j,\alpha}$.
The case $\alpha>-2$ is particularly relevant because then $s_j$, $j\ge 0$, are positive numbers; but complex zeros appear when $\alpha<-2$ (\cite[\S\,15.27, p.~483]{Wat}). However, their imaginary parts are bounded for each (bounded set of) $\alpha$, see~\cite[\S\,15.4, p.~497]{Wat}.

Let us consider the sums (depending on the parameter $\alpha$, some of them might be divergent)
\begin{align}
\label{eq:sk}
  \sigma_k &= \sum_{j=1}^\infty \frac{1}{s_j^{2k}},\\
\label{eq:rk}
  \varrho_k &= \sum_{j=1}^\infty \frac{1}{\I_{\alpha}(is_j)s_j^{2k}},\\
\label{eq:mam}
  \eta_{k,u} &= \sum_{j\in \ZZO} \frac{1}{(s_j-u)^{k+1}},\\
\label{eq:cal}
  \eta_k^{\{l\}} &= \sum_{j\in \ZZ\setminus\{0,l\}} \frac{1}{(s_j-s_l)^{k+1}},\\
\label{eq:vku}
  \omega_{k,u} &= \sum_{j\in \ZZO}\frac{1}{\I_\alpha(is_j)(s_j-u)^{k+1}},\\
\label{eq:vk}
  \omega_k^{\{l\}} &= \sum_{j\in \ZZ\setminus\{0,l\}} \frac{1}{\I_\alpha(is_j)(s_j-s_l)^{k+1}},
\end{align}
where $u\in \CC\setminus\{\pm s_1,\pm s_2,\dots\}$.
It has been already mentioned that series like $\sum_{j\in \ZZ}$ or similar must be understood as the principal value, that is, the limit of $\sum_{|j|\leq N}$ as $N$ goes to infinity.

Notice that for $\alpha=-1/2$ we have $s_j=j\pi$ and $\I_{\alpha}(is_j)=(-1)^j$,
so we can think of $\varrho_k$, $\omega_{k,u}$, and $\omega_k^{\{l\}}$ as a kind of ``alternating'' series.

Let us start with the two first sums, which are computed using the Bernoulli-Dunkl polynomials.

\begin{theorem}
\label{thm:sumassr}
Let $\alpha$ be a complex number which is not a negative integer and $k \ge 1$. Then
\begin{equation}
\label{eq:alpm1}
  \sigma_{k} = \frac{(-1)^{k+1}}{2^{2k}k!\,(\alpha+2)_{k-1}} \,\B_{2k}(1).
\end{equation}
If, in addition, $\Re \alpha < 2k -3/2$, then
\begin{equation}
\label{eq:alpm2}
  \varrho_{k} = \frac{(-1)^{k+1}}{2^{2k}k!\,(\alpha+2)_{k-1}} \,\B_{2k}(0).
\end{equation}
\end{theorem}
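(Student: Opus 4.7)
My strategy for both identities is to evaluate the Bernoulli-Dunkl generating function~(\ref{eq:defbd}) at $x=1$ or $x=0$, substitute $t=iu$ so that the singularities of $1/\I_{\alpha+1}(iu)$ in the variable $u$ are exactly the Bessel zeros $s_j$, insert one of the partial fraction decompositions derived in Section~\ref{sec:simplefr}, and compare Taylor coefficients in $u$.

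For~(\ref{eq:alpm1}), I would set $x=1$ and use $\Ea(iu)=\I_\alpha(iu)+\frac{iu}{2(\alpha+1)}\I_{\alpha+1}(iu)$ to rewrite the generating function as
\[
\frac{\I_\alpha(iu)}{\I_{\alpha+1}(iu)}+\frac{iu}{2(\alpha+1)}=\sum_{n=0}^\infty\frac{\B_n(1)\,i^n}{\cc_n}u^n.
\]
Inserting~(\ref{eq:pfd1}) and pairing the $\pm j$ summands (which is legitimate under the principal-value convention) gives
\[
\frac{\I_\alpha(iu)}{\I_{\alpha+1}(iu)}=1+\frac{1}{\alpha+1}\sum_{j=1}^\infty\frac{u^2}{u^2-s_j^2}=1-\frac{1}{\alpha+1}\sum_{k=1}^\infty\sigma_k u^{2k}
\]
after expanding $u^2/(u^2-s_j^2)=-\sum_{k\ge 1}(u/s_j)^{2k}$ for $|u|<|s_1|$ and interchanging sums. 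Matching the coefficient of $u^{2k}$ with $\B_{2k}(1)(-1)^k/\cc_{2k}$---the constant and linear terms match trivially, and the odd terms of order $\ge 3$ vanish because $\B_{2k+1}(1)=0$ for $k\ge 1$---yields $\sigma_k=(-1)^{k+1}(\alpha+1)\B_{2k}(1)/\cc_{2k}$, which becomes~(\ref{eq:alpm1}) upon using $\cc_{2k}=2^{2k}k!(\alpha+1)(\alpha+2)_{k-1}$.

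For~(\ref{eq:alpm2}), I would mimic this procedure with $x=0$, starting from the partial fraction decomposition displayed just before~(\ref{eq:pfd2}):
\[
\frac{1}{\I_{\alpha+1}(iu)}=1+\frac{u}{2(\alpha+1)}\sum_{j\in\ZZO}\frac{1}{(u-s_j)\I_\alpha(is_j)}.
\]
Expanding $u/(u-s_j)$ as a geometric series and using that $\I_\alpha$ is even (so $\I_\alpha(is_{-j})=\I_\alpha(is_j)$) together with $s_{-j}=-s_j$ kills all odd powers of $u$, so the coefficient of $u^{2k}$ on the right equals $-(\alpha+1)^{-1}\varrho_k$; matching with $\B_{2k}(0)(-1)^k/\cc_{2k}$ produces~(\ref{eq:alpm2}).

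The main technical difficulty is justifying the ingredients of the $\varrho_k$-argument in the full range $\Re\alpha<2k-3/2$, since the displayed partial fraction identity above is established in the paper only for $\Re\alpha<-1/2$. By the standard Bessel asymptotics $|\I_\alpha(is_j)|\asymp|s_j|^{-1/2-\Re\alpha}$, the summand defining $\varrho_k$ decays like $|s_j|^{\Re\alpha+1/2-2k}$, which is absolutely summable precisely when $\Re\alpha<2k-3/2$; this is also the exact threshold beyond which the $m$-fold differentiated decomposition~(\ref{eq:pfd2}) holds with $m=2k-1$. The cleanest way to bridge the gap is therefore to invoke~(\ref{eq:pfd2}) at the appropriate order $m$ and read off the same $u^{2k}$ coefficient by a termwise geometric expansion, or alternatively to establish~(\ref{eq:alpm2}) first for $\Re\alpha<-1/2$ and then extend it by analytic continuation in $\alpha$ once meromorphy of both sides in $\alpha$ has been checked.
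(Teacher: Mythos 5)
Your proposal is correct and follows essentially the same route as the paper: for~\eqref{eq:alpm1} the argument (evaluate~\eqref{eq:defbd} at $x=1$, insert~\eqref{eq:pfd1}, pair $\pm j$, expand geometrically and match even coefficients) is exactly the paper's, and for~\eqref{eq:alpm2} the "bridge" you recommend --- invoking~\eqref{eq:pfd2} with $m=2k-1$, whose validity threshold $\Re\alpha<2k-3/2$ you correctly identify, and comparing the relevant Taylor coefficient with the $(2k-1)$-fold derivative of the $x=0$ generating function --- is precisely what the paper does. The only cosmetic difference is that you first run the $m=0$ version (valid only for $\Re\alpha<-1/2$) and then upgrade, whereas the paper works directly with the differentiated decomposition.
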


\begin{proof}
Let us begin with~\eqref{eq:alpm1}. The starting point is the partial fraction decomposition~\eqref{eq:pfd1}, which can be written as
\begin{equation*}
  \frac{\I_\alpha (it)}{t\I_{\alpha+1}(it)} = \frac{1}{t}
  + \frac{1}{\alpha+1} \sum_{j=1}^\infty \frac{t}{t^2-s_j^2},
\end{equation*}
where, for $\alpha\in \CC\setminus\{-1,-2,\dots \}$, the series
converges absolutely (and uniformly on compact sets of $\CC\setminus\{\pm s_1,\pm s_2,\dots \}$).

Using the geometric series and changing the order of the sums (the absolute convergence allows the use of Fubini's theorem), we get
\begin{align}
\nonumber
  \frac{\I_\alpha (it)}{t\I_{\alpha+1}(it)}
  &= \frac{1}{t} -
  \frac{1}{\alpha+1}
  \sum_{j=1}^\infty \frac{t}{s_j^2} \sum_{n=0}^\infty \frac{t^{2n}}{s_j^{2n}}
  \\
\label{eq:alpm3}
  &= \frac{1}{t} -
  \frac{1}{\alpha+1}
  \sum_{k=1}^\infty \Big(\sum_{j=1}^\infty \frac{1}{s_j^{2k}} \Big)  t^{2k-1},
\end{align}
valid if $|t| < |s_j|$ for every~$j$.
On the other hand, evaluating the generating function \eqref{eq:defbd} for the Bernoulli-Dunkl polynomials at $x=1$ gives
\[
  \frac{\I_\alpha (t)}{\I_{\alpha+1}(t)} + \frac{t}{2(\alpha+1)}
  = \sum_{k=0}^\infty \frac{\B_{k,\alpha}(1)}{\cc_{k,\alpha}} \,t^k.
\]
Taking into account that $\B_0=1, \B_1(x)=x$ and that $\B_{2k+1}(x)$ vanishes at $x=1$ for $k\ge 1$, we get
\begin{equation}
\label{eq:alpm4}
  \frac{\I_\alpha (t)}{t\I_{\alpha+1}(t)}
  = \frac{1}{t} + \sum_{k=1}^\infty \frac{\B_{2k,\alpha}(1)}{\cc_{2k,\alpha}} \,t^{2k-1}.
\end{equation}
Now, the identity \eqref{eq:alpm1} follows easily by comparing \eqref{eq:alpm3} and~\eqref{eq:alpm4}.

The proof of \eqref{eq:alpm2} proceeds in a similar way, but using now the partial fraction decomposition~\eqref{eq:pfd2} with $m=2k-1$, i.e.,
\[
  \left(\frac{1}{t\I_{\alpha+1}(it)}\right)^{(2k-1)}
  = -\frac{(2k-1)!}{t^{2k}} - \frac{1}{2(\alpha+1)}
    \sum_{j\in\ZZO} \frac{(2k-1)!}{\I_\alpha(is_j)(t-s_j)^{2k}},
\]
where, for $\alpha\in \CC\setminus\{-1,-2,\dots \}$ with $\Re \alpha < 2k-3/2$, the series converges absolutely. Putting together the terms with $j$ and $-j$, using the power series expansion
\[
   \frac{1}{(t-s_j)^{2k}} + \frac{1}{(t+s_j)^{2k}}
   = \sum_{n \geq k}
   2 \binom{2n-1}{2k-1} \frac{t^{2n-2k}}{s_j^{2n}},
\]
valid if $|t| < |s_j|$ for every $j$, and applying Fubini's theorem to change the order of the sums give
\begin{multline}
\label{eq:pfd2b}
  \left(\frac{1}{t\I_{\alpha+1}(it)}\right)^{(2k-1)}
  = -\frac{(2k-1)!}{t^{2k}}
  \\- \frac{1}{\alpha+1}
    \sum_{n\geq k}
    \Big(\sum_{j\geq 1} \frac{1}{\I_\alpha(is_j)s_j^{2n}} \Big)
    (2n-1)(2n-2)\dots(2n+1-2k) t^{2n-2k}.
\end{multline}
Now, taking $x=0$ in~\eqref{eq:defbd} gives
\[
   \frac{1}{t\I_{\alpha+1}(it)} = \frac{1}{t} + \sum_{n\geq 1}
   \frac{\B_{2n,\alpha}(0)}{\cc_{2n,\alpha}} (-1)^n t^{2n-1}.
\]
After differentiating $(2k-1)$ times, we can compare the constant term in the resulting expansion with the constant term in~\eqref{eq:pfd2b} to obtain~\eqref{eq:alpm2}.
\end{proof}

Notice that for $\alpha=-1/2$, Theorem~\ref{thm:sumassr} gives the corresponding formulas for $\sum_{j=1}^{\infty} 1/j^{2k}$ and $\sum_{j=1}^{\infty} (-1)^j/j^{2k}$ (see~\eqref{eq:Euler} and~\eqref{eq:Euler2}, respectively) in terms of Bernoulli polynomials (remember that, by~\eqref{eq:BtoB}, $\B_{n,-1/2}(x) = 2^n B_n((x+1)/2)$).

As explained above, the series $\sigma_k$ and $\varrho_k$ have already been studied in the literature, but the expressions~\eqref{eq:alpm1} and~\eqref{eq:alpm2} in terms of the Bernoulli-Dunkl polynomials are new.

The recurrence relation~\eqref{eq:x2nbd} for the Bernoulli-Dunkl polynomials gives the corresponding recurrence relation for the sums~\eqref{eq:sk} and~\eqref{eq:rk}.

\begin{corollary}
Under the hypothesis of Theorem~\ref{thm:sumassr},
\begin{align*}
  n &= \sum_{j=1}^n (-1)^{j+1}4^j(n-j+1)_j (\alpha+n-j+2)_j \sigma_{j}, \\
  -\alpha-1 &= \sum_{j=1}^n (-1)^{j+1}4^j(n-j+1)_j(\alpha+n-j+2)_j \varrho_{j}.
\end{align*}
\end{corollary}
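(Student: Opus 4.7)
My plan is to evaluate the recurrence~\eqref{eq:x2nbd} for $\B_{2n}$ at the points $x=1$ (for the first identity) and $x=0$ (for the second), and then substitute the closed forms for $\B_{2j}(1)$ and $\B_{2j}(0)$ provided by Theorem~\ref{thm:sumassr}.

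First, I would reorganize the recurrence into a single sum from $j=0$ to $j=n$ by absorbing the explicit $\B_{2n}(x)$ leading term as the $j=n$ summand. This absorption is legitimate because at $j=n$ the binomial $\binom{2n}{2n}_\alpha=1$ and the corresponding denominator simplifies to $\alpha+1$, cancelling the external factor $(\alpha+1)$. The $j=0$ term of the resulting sum is a constant (since $\B_0=1$), which I would move to the left-hand side. Evaluation at $x=1$ simplifies the left-hand side to $n/(\alpha+n+1)$; evaluation at $x=0$ gives $-(\alpha+1)/(\alpha+n+1)$.

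Next I would substitute $\B_{2j}(1)=(-1)^{j+1}2^{2j}j!(\alpha+2)_{j-1}\sigma_j$ from Theorem~\ref{thm:sumassr} (and the analogous expression for $\B_{2j}(0)$ in terms of $\varrho_j$). The simplification of the resulting coefficient proceeds as follows: using $(\alpha+2)_{j-1}=(\alpha+1)_j/(\alpha+1)$ cancels the $(\alpha+1)_j$ appearing in $\binom{2n}{2j}_\alpha=\binom{n}{j}(\alpha+1)_n/[(\alpha+1)_j(\alpha+1)_{n-j}]$ together with the external $(\alpha+1)$; then $n!/(n-j)!=(n-j+1)_j$ and $(\alpha+1)_n/(\alpha+1)_{n-j}=(\alpha+n-j+1)_j$ produce the falling-factorial factors. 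Finally, multiplying both sides by $\alpha+n+1$ fuses $(\alpha+n-j+1)_j/(\alpha+n-j+1)$ with $\alpha+n+1$ into the Pochhammer symbol $(\alpha+n-j+2)_j$, and the left-hand side becomes $n$ or $-\alpha-1$ respectively.

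The only challenge here is the Pochhammer/factorial bookkeeping; no conceptual obstacle arises, and the two identities in the corollary are proved by this single calculation, differing only in the substitution of the value of $\B_{2j}$ at the appropriate point.
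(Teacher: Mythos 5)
Your proof is correct and is precisely the argument the paper intends (the paper offers no proof beyond the remark that the recurrence~\eqref{eq:x2nbd} yields the corollary): evaluating at $x=1$ and $x=0$, substituting the closed forms from Theorem~\ref{thm:sumassr}, and carrying out the Pochhammer bookkeeping exactly as you describe produces the stated coefficients $(-1)^{j+1}4^j(n-j+1)_j(\alpha+n-j+2)_j$ and left-hand sides $n$ and $-\alpha-1$. The only point worth flagging is that your intermediate value $n/(\alpha+n+1)$ presupposes that the denominator in~\eqref{eq:x2nbd} is $\alpha+n-j+1$ rather than the printed $\alpha+2n-2j+1$; the former is what Lemma~\ref{lem:xn} actually gives (it is, for instance, what makes the $n=1$ case $x^2=\B_2(x)+\frac{\alpha+1}{\alpha+2}$ an identity), so you have silently corrected a typo in the paper and your computation is the right one.
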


The first relation can be found in \cite[p.~149]{Sne}, or in
\cite[identity~(14)]{Ki}; the second one in \cite[p.~151]{Sne}.

The Calogero-Dunkl numbers~\eqref{eq:mamoneon} can  be used to sum the series~\eqref{eq:mam} and~\eqref{eq:cal}.

\begin{theorem}
\label{thm:cag}
For $\alpha \in \CC\setminus\{-1,-2,\dots\}$, $k=0,1,2,\dots$, and
$u\in \CC\setminus\{0, \pm s_1,\allowbreak\pm s_2,\dots\}$, the sum of the series $\eta_{k,u}$ defined in \eqref{eq:mam} is
\begin{equation}
\label{eq:mam1i}
  \eta_{k,u} = 2(1+\alpha) \left(\frac{(-1)^k}{u^{k+1}}-i^{k+1}\am_{k,\alpha,iu}\right).
\end{equation}
Moreover, the series $\eta_{k,u}$ \eqref{eq:mam} and $\eta_k^{\{l\}}$ \eqref{eq:cal} satisfy the recurrence relations
\begin{align}
\label{eq:mam1}
  \sum_{j=0}^k \frac{\I_\alpha^{(k-j+1)}(iu)}{i^j(k-j)!}
  \left(\eta_{j,u}-\frac{(-1)^j 2(1+\alpha)}{u^{j+1}}\right)
  &= -\frac{\I_\alpha^{(k)}(iu)}{k!}, \\
\label{eq:mam0}
  \sum_{j=0}^k \frac{\I_\alpha^{(k-j+1)}(is_l)}{i^{j+1}(k-j)!}
  \left(\eta_j^{\{l\}}-\frac{(-1)^j 2(1+\alpha)}{s_l^{j+1}}\right)
  &= \frac{\I_\alpha^{(k+2)}(is_l)}{(k+1)!} - \frac{\I_\alpha^{(k)}(is_l)}{k!}.
\end{align}
In particular,
\begin{equation}
\label{eq:cal1}
  \eta_0^{\{l\}} = \frac{3+2\alpha}{2s_l},\quad
  \eta_1^{\{l\}} = -\frac{(3+2\alpha)(7+2\alpha)}{12s_l^2}+\frac{1}{3}.
\end{equation}
\end{theorem}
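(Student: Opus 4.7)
The plan is to derive all four claims from the partial fraction decomposition~\eqref{eq:pfd1}, together with the identity $2(1+\alpha)\I_\alpha'(z) = z\I_{\alpha+1}(z)$ established in the proof of Lemma~\ref{lem:der}. Throughout, set $F(t) = \I_\alpha(it)/(t\I_{\alpha+1}(it))$. For~\eqref{eq:mam1i}, differentiating~\eqref{eq:pfd1} $k$ times and evaluating at $t=u$ yields $F^{(k)}(u)/k! = (-1)^k/u^{k+1} - \eta_{k,u}/(2(1+\alpha))$. On the other hand, the substitution $z=it$ gives $F(t) = i\,g(it)$ where $g(z) = \I_\alpha(z)/(z\I_{\alpha+1}(z))$, so the generating function~\eqref{eq:mamoneon} forces $F^{(k)}(u) = i^{k+1}k!\,\am_{k,\alpha,iu}$. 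Equating the two expressions produces~\eqref{eq:mam1i}.

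For the recurrence~\eqref{eq:mam1}, I would use $2(1+\alpha)\I_\alpha'(z) = z\I_{\alpha+1}(z)$ to rewrite $2(1+\alpha)F(t)\I_\alpha'(it) = i\I_\alpha(it)$, then apply Leibniz's rule for the $k$-th $t$-derivative at $t=u$, using $[\I_\alpha'(it)]^{(k-j)}|_{t=u} = i^{k-j}\I_\alpha^{(k-j+1)}(iu)$ and $[\I_\alpha(it)]^{(k)}|_{t=u} = i^k\I_\alpha^{(k)}(iu)$; substituting the formula for $F^{(j)}(u)/j!$ obtained in the previous step rearranges the identity into the claimed form.

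The main obstacle is~\eqref{eq:mam0}, since $F$ has a pole at $t=s_l$ and its derivatives there are not defined. I would remove the pole by setting $\widetilde F(t) = F(t) - 1/(2(\alpha+1)(t-s_l))$; by~\eqref{eq:pfd1} this function has the partial fraction $1/t + (2(\alpha+1))^{-1}\sum_{j\in\ZZ\setminus\{0,l\}}1/(t-s_j)$, which is analytic near $s_l$, and differentiating $k$ times at $s_l$ gives $\widetilde F^{(k)}(s_l)/k! = (-1)^k/s_l^{k+1} - \eta_k^{\{l\}}/(2(1+\alpha))$. Multiplying by $2(1+\alpha)\I_\alpha'(it)$ produces
\[
  2(1+\alpha)\widetilde F(t)\I_\alpha'(it) = i\I_\alpha(it) - \frac{\I_\alpha'(it)}{t-s_l},
\]
a genuinely analytic identity at $t=s_l$ because $\I_\alpha'(is_l) = 0$. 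Applying Leibniz on the left, and the local Taylor expansion of $\I_\alpha'(it)/(t-s_l)$ around $s_l$ (whose $k$-th derivative at $s_l$ reduces to $i^{k+1}\I_\alpha^{(k+2)}(is_l)/(k+1)$) on the right, then yields~\eqref{eq:mam0}.

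Finally, the explicit values~\eqref{eq:cal1} come from specializing~\eqref{eq:mam0} to $k=1$ and $k=2$; the case $k=0$ degenerates since every surviving term on the left carries the factor $\I_\alpha'(is_l)=0$. Using $P_2$, $P_3$, $P_4$ from Corollary~\ref{cor:cmn} to express $\I_\alpha^{(2)}(is_l)$, $\I_\alpha^{(3)}(is_l)$, $\I_\alpha^{(4)}(is_l)$ in closed form then lets one solve the resulting linear relations for $\eta_0^{\{l\}}$ and $\eta_1^{\{l\}}$.
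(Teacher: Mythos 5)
Your proposal is correct, and for \eqref{eq:mam1i} and \eqref{eq:mam1} it is essentially the paper's argument in a different dress: the paper re-expands \eqref{eq:pfd1} as a power series in $t-u$ via geometric series and Fubini and matches coefficients against \eqref{eq:mamoneon}, which is the same as your $k$-fold differentiation at $t=u$ (your term-by-term differentiation is licensed by the locally uniform convergence of the symmetric partial sums of \eqref{eq:pfd1}, which plays the role of the paper's Fubini step and should be said explicitly); likewise your Leibniz expansion of $2(1+\alpha)F(t)\,\I_\alpha'(it)=i\,\I_\alpha(it)$ is exactly the recurrence \eqref{eq:mamoneon2} that the paper invokes. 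Where you genuinely diverge is \eqref{eq:mam0}: the paper substitutes $\eta_{j,u}=\eta_{j,u}^{\{l\}}+(s_l-u)^{-j-1}$ into \eqref{eq:mam1} and takes the limit $u\to s_l$, relying on an unstated cancellation of the singular terms, whereas you subtract the pole from $F$ itself and differentiate the resulting identity, which is analytic at $t=s_l$ precisely because $\I_\alpha'(is_l)=0$. Your version buys a cleaner derivation (no limit to compute) and makes the provenance of the term $\I_\alpha^{(k+2)}(is_l)/(k+1)!$ transparent as the $k$-th Taylor coefficient of $\I_\alpha'(it)/(t-s_l)$. One remark: if you carry your Leibniz computation through carefully, the denominators come out as $i^{j+1}$, matching \eqref{eq:mam0} but not the printed \eqref{eq:mam1}, which is off by a factor of $i$ (check $\alpha=-1/2$, $k=0$: it reduces to $-i\cos u=-\cos u$); so your claim that the computation ``rearranges into the claimed form'' holds only up to this typo in the statement, and you should record the corrected form rather than force agreement with \eqref{eq:mam1} as printed.
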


\begin{proof}
The starting point is again the partial fraction decomposition \eqref{eq:pfd1}, i.e.,
\begin{equation}
\label{eq:pfd1c}
  \frac{\I_\alpha (it)}{t\I_{\alpha+1}(it)}
  = \frac{1}{t} + \frac{1}{2(\alpha+1)} \sum_{j=1}^\infty \left(\frac{1}{t-s_j}+\frac{1}{t+s_j}\right).
\end{equation}
Writing
\begin{align*}
  t-s_j &= (u-s_j)\left(1-\frac{t-u}{s_j-u}\right),
  \\
  t+s_j &= (u+s_j)\left(1-\frac{t-u}{-s_j-u}\right),
\end{align*}
inserting it in~\eqref{eq:pfd1c} and using the geometric series leads to
\[
  \frac{\I_\alpha(it)}{t\I_{\alpha+1}(it)}
  = \frac{1}{t} - \frac{1}{2(\alpha+1)} \sum_{j=1}^\infty
  \sum_{k=0}^\infty (t-u)^k \left( \frac{1}{(s_j-u)^{k+1}} + \frac{1}{(-s_j-u)^{k+1}} \right).
\]
It is easy to check that
\[
   \left| \frac{1}{(s_j-u)^{k+1}} + \frac{1}{(-s_j-u)^{k+1}} \right|
   \leq C \frac{1}{|s_j|^2}
\]
for every $k$ and $j$, with some constant $C$ depending on $u$, and then deduce that the double series above converges absolutely if $|t-u| < 1$. This allows the use of Fubini's theorem to get
\[
  \frac{\I_\alpha(it)}{t\I_{\alpha+1}(it)}
  = \frac{1}{t} - \frac{1}{2(\alpha+1)}
  \sum_{k=0}^\infty (t-u)^k \sum_{j\in \ZZO} \frac{1}{(s_j-u)^{k+1}},
\]
if $|t-u| < 1$. Changing $t$ to $t+u$ gives
\[
  \frac{\I_\alpha(i(t+u))}{(t+u)\I_{\alpha+1}(i(t+u))}
  = \frac{1}{t+u} - \frac{1}{2(\alpha+1)}
  \sum_{k=0}^\infty t^k \sum_{j\in \ZZO} \frac{1}{(s_j-u)^{k+1}}
\]
for $|t| < 1$. Comparing with the generating function \eqref{eq:mamoneon} for the Calogero-Dunkl numbers,
we straightforwardly get
\[
  i^k \am_{k,\alpha,iu} = \frac{(-1)^k}{u^{k+1}} - \frac{1}{2(\alpha+1)}
  \sum_{j\in \ZZO} \frac{1}{(s_j-u)^{k+1}},
\]
from where \eqref{eq:mam1i} follows.

The recurrence relation \eqref{eq:mamoneon2} for the Calogero-Dunkl numbers gives then the recurrence~\eqref{eq:mam1}. Now consider the sums
\[
  \eta_{k,u}^{\{l\}} = \sum_{j\in \ZZ\setminus\{0,l\}} \frac{1}{(s_j-u)^{k+1}}
  = \eta_{k,u}-\frac{1}{(s_l-u)^{k+1}}.
\]
Using \eqref{eq:mam1} we get
\begin{multline*}
\qquad
  \sum_{j=0}^k \frac{\I_\alpha^{(k-j+1)}(iu)}{i^j(k-j)!}
  \left(\eta_{j,u}^{\{l\}}-\frac{(-1)^j 2(1+\alpha)}{u^{j+1}}\right)
  \\
  = \sum_{j=0}^k \frac{\I_\alpha^{(k-j+1)}(iu)}{i^j(k-j)!}
  \frac{1}{(s_l-u)^{j+1}}-\frac{\I_\alpha^{(k)}(iu)}{k!}.
\qquad
\end{multline*}
Taking now limit when $u\to s_l$ proves \eqref{eq:mam0} after an easy computation.

Finally, the identities \eqref{eq:cal1} follow by taking $k=1$ and $k=2$ and using then Corollary~\ref{cor:cmn}.
\end{proof}

The first series in \eqref{eq:cal1} is the Calogero series~(\cite{Cal}).

Using the identities \eqref{eq:mis1} and \eqref{eq:mis2} for the first few Calogero-Dunkl numbers $\am_{n,\alpha,ij_l}$, where $j_l$ is a non-null zero of
the Bessel function $J_\alpha(x)$, from~\eqref{eq:mam1i} we get
\[
  \eta_{0,j_l} = \frac{2(1+\alpha)}{j_l},\quad \eta_{1,j_l}=1-\frac{2(1+\alpha)}{j_l^2},
  \quad \eta_{2,j_l} = \frac{1+2\alpha}{2j_l}+\frac{2(1+\alpha)}{j_l^3}.
\]

We finally use the Apostol-Euler-Dunkl polynomials~\eqref{eq:defaed} to sum the series \eqref{eq:vku} and~\eqref{eq:vk}.

\begin{theorem}
\label{thm:cag2}
For $\alpha \in \CC\setminus\{-1,-2,\dots\}$, $k=0,1,2,\dots$ with $\Re \alpha < k -1/2$,
and $u\in \CC\setminus\{0, \pm s_1,\pm s_2,\dots\}$, the sum of the series $\omega_{k,u}$ defined in~\eqref{eq:vku} is
\begin{equation}
\label{eq:sumAED}
  \omega_{k,u}
  = 2(1+\alpha) \left(\frac{(-1)^k}{u^{k+1}}
  - \frac{i^k\E_{k,\alpha,iu}(0)}{u\I_{\alpha+1}(iu)\cc_k}\right).
\end{equation}
Moreover, the series $\omega_{k,u}$ \eqref{eq:vku} and $\omega_k^{\{l\}}$ \eqref{eq:vk} satisfy the recurrence relations
\begin{align}
\label{eq:mamp}
  \sum_{j=0}^k \frac{\I_\alpha^{(k-j+1)}(iu)}{i^j(k-j)!}
  \left(\omega_{j,u}-\frac{2(1+\alpha)(-1)^j}{u^{j+1}}\right) &= 0,
  \\
\label{eq:mam5}
  \sum_{j=0}^k \frac{\I_\alpha^{(k-j+1)}(is_l)}{i^{j+1}(k-j)!}
  \left(\omega_j^{\{l\}}-\frac{2(1+\alpha)(-1)^j}{s_l^{j+1}}\right)
  &= \frac{\I_\alpha^{(k+2)}(is_l)}{(k+1)!\,\I_\alpha(is_l)}.
\end{align}
In particular,
\begin{align}
\label{eq:acal1}
  \omega_0^{\{l\}} &= \frac{2(1+\alpha)}{s_l} - \frac{1+2\alpha}{2s_l\I_\alpha(is_l)},\\
\label{eq:acal2}
  \omega_1^{\{l\}} &= -\frac{2(1+\alpha)}{s_l^2} -\frac{1}{6\I_\alpha(is_l)}\left( 1+\frac{(1+2\alpha)(2\alpha-3)}{2s_l^2}\right).
\end{align}
\end{theorem}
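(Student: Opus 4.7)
The plan is to mirror the proof of Theorem~\ref{thm:cag}, replacing Calogero-Dunkl numbers by Apostol-Euler-Dunkl polynomials and using the higher-derivative partial fraction decomposition~\eqref{eq:pfd2} in place of~\eqref{eq:pfd1}.

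First, I would establish~\eqref{eq:sumAED}. Take $m=k$ in~\eqref{eq:pfd2} (the hypothesis $\Re\alpha<k-1/2$ is exactly the convergence condition $\Re\alpha-m+1/2<0$) and evaluate at $t=u$; using $(u-s_j)^{k+1}=(-1)^{k+1}(s_j-u)^{k+1}$ this rearranges to
\[
  \omega_{k,u} = 2(1+\alpha)\left[\frac{(-1)^k}{u^{k+1}} - \frac{1}{k!}\left(\frac{1}{t\,\I_{\alpha+1}(it)}\right)^{(k)}\bigg|_{t=u}\right].
\]
It then remains to identify the $k$-th derivative with $\frac{i^k k!\,\E_{k,\alpha,iu}(0)}{u\,\I_{\alpha+1}(iu)\,\cc_k}$. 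Setting $x=0$ in~\eqref{eq:defaed} shows that $\E_{n,\alpha,v}(0)/\cc_n$ is the $n$-th Taylor coefficient at $t=0$ of $\frac{v\,\I_{\alpha+1}(v)}{(t+v)\,\I_{\alpha+1}(t+v)}$, namely
\[
  \frac{\E_{n,\alpha,v}(0)}{\cc_n} = \frac{v\,\I_{\alpha+1}(v)}{n!}\left(\frac{1}{s\,\I_{\alpha+1}(s)}\right)^{(n)}\bigg|_{s=v}.
\]
Applying this with $v=iu$ and using the chain rule for the substitution $s=it$ produces the required identification, and~\eqref{eq:sumAED} follows.

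Next, to obtain the recurrence~\eqref{eq:mamp}, I would evaluate the Appell-Dunkl recurrence~\eqref{eq:xned} at $x=0$ for $n=k\ge 1$, so that the left-hand side vanishes and one gets $\sum_{j=0}^{k}\frac{\E_{j,\alpha,u}(0)\,\I_\alpha^{(k-j+1)}(u)}{\cc_j(k-j)!}=0$. Replacing $u$ by $iu$, substituting $\E_{j,\alpha,iu}(0)$ by its expression in terms of $\omega_{j,u}$ coming from~\eqref{eq:sumAED}, and cancelling the nonzero common factor $\frac{u\,\I_{\alpha+1}(iu)}{2(1+\alpha)}$ yields exactly~\eqref{eq:mamp}. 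The explicit values~\eqref{eq:acal1} and~\eqref{eq:acal2} then follow by specialising~\eqref{eq:mam5} at $k=1$ and $k=2$ and inserting the derivatives $\I_\alpha^{(j)}(is_l)=\I_\alpha(is_l)P_j(is_l)$ supplied by Corollary~\ref{cor:cmn}; the identity $\I_\alpha'(is_l)=0$ (equivalently $P_1=0$) causes the $\omega_k^{\{l\}}$ term to drop out, so that the equation actually solves for $\omega_{k-1}^{\{l\}}$.

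The step I expect to be the main obstacle is the passage from~\eqref{eq:mamp} to~\eqref{eq:mam5}. Writing $\omega_{j,u}=\omega_{j,u}^{\{l\}}+\frac{1}{\I_\alpha(is_l)(s_l-u)^{j+1}}$ with $\omega_{j,u}^{\{l\}}\to\omega_j^{\{l\}}$ as $u\to s_l$, inserting this decomposition into~\eqref{eq:mamp} produces the desired left-hand side of~\eqref{eq:mam5} plus a correction term $\frac{1}{\I_\alpha(is_l)}\sum_{j=0}^{k}\frac{\I_\alpha^{(k-j+1)}(iu)}{i^j(k-j)!(s_l-u)^{j+1}}$, each of whose summands diverges as $u\to s_l$. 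Taylor-expanding each $\I_\alpha^{(k-j+1)}(iu)$ around $iu=is_l$, the cancellation of all the negative powers of $u-s_l$ is guaranteed a priori by the finiteness of $\omega_k^{\{l\}}$; the surviving constant contribution comes exclusively from the $m=j+1$ coefficient of each expansion, and summing these contributions and using the elementary identity $\sum_{j=0}^{k}\frac{(-1)^j}{(k-j)!(j+1)!}=\frac{1}{(k+1)!}$ produces the right-hand side of~\eqref{eq:mam5} after absorbing an overall factor of $i$ into the denominator $i^{j+1}$.
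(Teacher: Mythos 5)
Your proposal is correct and follows essentially the same route as the paper: \eqref{eq:sumAED} comes from the partial fraction decomposition \eqref{eq:pfd2} with $m=k$ compared against the generating function \eqref{eq:defaed} at $x=0$, the recurrence \eqref{eq:mamp} from \eqref{eq:xned} at $x=0$, and \eqref{eq:mam5} by letting $u\to s_l$ after splitting off the $j=l$ term. Your direct evaluation at $t=u$ and your explicit accounting of how the divergent terms cancel in the limit $u\to s_l$ (including the identity $\sum_{j=0}^{k}\frac{(-1)^j}{(k-j)!\,(j+1)!}=\frac{1}{(k+1)!}$) are just slightly more detailed renderings of the paper's coefficient-comparison and limit arguments.
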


\begin{proof}
We proceed as in the proof of Theorem~\ref{thm:cag}, but using now the partial fraction decomposition~\eqref{eq:pfd2}, i.e.,
\begin{equation*}
  \left(\frac{1}{t\I_{\alpha+1}(it)}\right)^{(k)}
  = \frac{(-1)^kk!}{t^{k+1}} + \frac{1}{2(\alpha+1)} \sum_{j\in\ZZO}
  \frac{(-1)^kk!}{\I_\alpha(is_j)(t-s_j)^{k+1}},
\end{equation*}
for $\alpha\in \CC\setminus\{-1,-2,\dots\}$ with $\Re \alpha < k-1/2$.
Using again that
\[
  t-s_j = (u-s_j)\left(1-\frac{t-u}{s_j-u}\right),
\]
the $k$-th derivative of the geometric series, Fubini's theorem to permute the order of the sums, changing $t$ to $t+u$ and, finally, comparing with the $k$-th derivative of the generating function \eqref{eq:defaed} for the Apostol-Euler-Dunkl polynomials evaluated at $x=0$, we get~\eqref{eq:sumAED}.

The recurrence~\eqref{eq:xned} for the polynomials $\E_{k,\alpha,iu}$ gives for $k\ge 1$ the recurrence~\eqref{eq:mamp} for the sums~$\omega_{k,u}$.

Consider now the sums
\begin{equation}
\label{eq:vkul}
  \omega_{k,u}^{\{l\}} = \sum_{j\in \ZZ\setminus \{0,l\}} \frac{1}{\I_\alpha(is_j)(s_j-u)^{k+1}}.
\end{equation}
Since
\[
  \omega_{k,u} = \omega_{k,u}^{\{l\}} + \frac{1}{\I_\alpha(is_l)(s_l-u)^{k+1}},
\]
we get the recurrence relation
\begin{equation}
\label{eq:rr}
  \sum_{j=0}^k \frac{\I_\alpha^{(k-j+1)}(iu)}{i^j(k-j)!}
  \left(\omega_{j,u}^{\{l\}}-\frac{2(1+\alpha)(-1)^j}{u^{j+1}}\right)
  = \frac{-1}{\I_\alpha(is_l)} \sum_{j=0}^k \frac{\I_\alpha^{(k-j+1)}(iu)}{(k-j)!(s_l-u)^{j+1}}.
\end{equation}
From \eqref{eq:vkul} and \eqref{eq:vk}, one straightforwardly gets $\lim_{u\to s_l} \omega_{k,u}^{\{l\}} = \omega_k^{\{l\}}$.
Taking now limit when $u$ goes to $s_l$ in~\eqref{eq:rr} proves that
\[
  \sum_{j=0}^k \frac{\I_\alpha^{(k-j+1)}(is_l)}{i^{j+1}(k-j)!}
  \left(\omega_j^{\{l\}}-\frac{2(1+\alpha)(-1)^j}{s_l^{j+1}}\right)
  = \frac{\I_\alpha^{(k+2)}(is_l)}{(k+1)!\,\I_\alpha(is_l)},
\]
which is the identity~\eqref{eq:mam5}.

Finally, the identities \eqref{eq:acal1} and \eqref{eq:acal2} follow by taking $k=1$ and $k=2$ and using Corollary~\ref{cor:cmn}.
\end{proof}

For $\alpha=-1/2$ and $u=\pi/2$, the identity \eqref{eq:sumAED} reduces to the Euler identity~\eqref{eq:Euler2}.

%% COMPROBACI\'ON CON MATHEMATICA:
% Table[Sum[(-1)^(j - 1)/(2 j - 1)^(2 k + 1), {j, 1, Infinity}], {k, 0, 6}]
% Table[(-1)^k*EulerE[2 k, 1/2]*Pi^(2 k + 1)/(4*(2 k)!), {k, 0, 6}]

%%%%%%%%%
\section{Fourier-Dunkl series for the Bernoulli-Dunkl polynomials}
%\label{sec:FD-BD}
%%%%%%%%%

Along this section, we assume that $\alpha>-1$. Hence,  the Bessel function $J_{\alpha+1}(x)$ has an
increasing sequence of positive zeros $\{s_j\}_{j\ge1}$, and the real function
$\Im(\Ea(ix)) = \frac{x}{2(\alpha+1)}\,\I_{\alpha+1}(ix)$
is odd and has an infinite sequence of zeros $\{s_j\}_{j\in \ZZ}$
(with $s_{-j} = -s_j$ and $s_0 = 0$).
We associate to each $\alpha$ an orthonormal system called Fourier-Dunkl system and compute the corresponding Fourier expansions of the Bernoulli-Dunkl polynomials.

%%%%%%%%%
\subsection{The Fourier-Dunkl orthogonal system}
%%%%%%%%%

The Fourier-Dunkl orthonormal system  is formed by the functions
\begin{equation}
\label{eq:eaj}
  e_{\alpha,j}(r) = \frac{2^{\alpha/2}\Gamma(\alpha+1)^{1/2}}
    {|\I_{\alpha}(is_j)|} \Ea(is_j r),
  \quad j\in \ZZO, \quad r \in (-1,1),
\end{equation}
and $e_{\alpha,0}(r) = 2^{(\alpha+1)/2}\Gamma(\alpha+2)^{1/2}$ (notice the difference of a constant factor $(2\alpha+2)^{1/2}$ with respect to~\eqref{eq:eaj}).
To simplify the notation we sometimes write $e_{\alpha,j} = e_j$.

From the definition, and taking into account that
$\Ea(0) = 1$, $\overline{\Ea(ix)} = \Ea(-ix)$,
and
$\Lambda_{\alpha,x} \Ea(\lambda x) = \lambda \Ea(\lambda x)$,
it is easy to prove the following:
\[
  e_{\alpha,0}(1) = e_{\alpha,0}(-1) = 2^{(\alpha+1)/2}\Gamma(\alpha+2)^{1/2},
\]
\begin{equation}
\label{eq:ej(1)}
  e_{\alpha,j}(1) = e_{\alpha,j}(-1) = (-1)^j \cdot 2^{\alpha/2}\Gamma(\alpha+1)^{1/2},
  \quad j \in \ZZO,
\end{equation}
\[
  \overline{e_{\alpha,j}(x)} = e_{\alpha,-j}(x),
\]
\begin{equation}
\label{eq:Lambdaej}
  \La e_{\alpha,j}(x) = is_j e_{\alpha,j}(x)
\end{equation}
(the last identity is trivial for $j=0$ because $\La$ vanishes on constant functions and $s_0=0$); notice that the factor $(-1)^j$ in~\eqref{eq:ej(1)} follows from this fact: if $r_{\nu,1},r_{\nu,2},\dots$ are the positive zeros of $J_\nu(x)$ (all of them are simple) arranged in ascending order of magnitude, then, if $\nu>-1$,
\[
  0 < r_{\nu,1} < r_{\nu+1,1} < r_{\nu,2} < r_{\nu+1,2} < \cdots,
\]
a result that is usually expressed by saying that the positive zeros of $J_\nu(x)$ interlace with those of $J_{\nu+1}(x)$ (see, for instance, \cite[\S\,15.22, p.~479]{Wat}).

Evaluating \eqref{eq:eaj} on $r=0$, we also have
\[
  e_{\alpha,j}(0) = \frac{2^{\alpha/2}\Gamma(\alpha+1)^{1/2}}{|\I_{\alpha}(is_j)|},
  \quad (-1)^j e_{\alpha,j}(0) = \frac{2^{\alpha/2}\Gamma(\alpha+1)^{1/2}}{\I_{\alpha}(is_j)},
  \qquad j \in \ZZO.
\]

With this notation and
\[
  d\mua(x) = (2^{\alpha+1}\Gamma(\alpha+1))^{-1}|x|^{2\alpha+1}\, dx,
\]
the following result was proved in~\cite{CV}:

\begin{theorem}
Let $\alpha > -1$. Then, the sequence of functions
$\{e_{\alpha,j}\}_{j\in \ZZ}$
is a complete orthonormal system in $L^2((-1,1),d\mua)$.
\end{theorem}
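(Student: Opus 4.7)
The plan has three steps: orthogonality, normalization, and completeness. For orthogonality, I would combine the eigenvalue relation \eqref{eq:Lambdaej} with a suitable integration-by-parts identity for the Dunkl operator on $(-1,1)$, namely
\[
\int_{-1}^{1} \La(f)\, g\, d\mua + \int_{-1}^{1} f\, \La(g)\, d\mua = \frac{f(1)g(1) - f(-1)g(-1)}{2^{\alpha+1}\Gamma(\alpha+1)}.
\]
This identity is derived by splitting the integral at $0$ (the weight $|x|^{2\alpha+1}$ is smooth away from the origin), applying ordinary integration by parts on each half, and observing that the non-local term $\frac{2\alpha+1}{2}\cdot\frac{f(x)-f(-x)}{x}$ combines with the contribution of differentiating $|x|^{2\alpha+1}$ to leave an integrand odd in $x$, whose integral vanishes. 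Since $\La$ has real coefficients, $\La\overline{e_{\alpha,k}} = -is_k\, \overline{e_{\alpha,k}}$; applying the identity to $e_{\alpha,j}$ and $\overline{e_{\alpha,k}}$ yields
\[
i(s_j - s_k) \int_{-1}^{1} e_{\alpha,j}\, \overline{e_{\alpha,k}}\, d\mua
= \frac{e_{\alpha,j}(1)\overline{e_{\alpha,k}(1)} - e_{\alpha,j}(-1)\overline{e_{\alpha,k}(-1)}}{2^{\alpha+1}\Gamma(\alpha+1)}.
\]
By \eqref{eq:ej(1)}, $e_{\alpha,j}(\pm 1)$ is independent of the sign, so the right-hand side vanishes and orthogonality follows whenever $s_j \ne s_k$.

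For the normalization $\|e_{\alpha,j}\|_{L^2(d\mua)} = 1$ with $j \in \ZZO$, it suffices to show that $\int_{-1}^{1} |\Ea(is_j x)|^2\, d\mua(x) = |\I_\alpha(is_j)|^2 / (2^{\alpha}\Gamma(\alpha+1))$. Writing $\Ea(is_j x) = \I_\alpha(is_j x) + \frac{is_j x}{2(\alpha+1)} \I_{\alpha+1}(is_j x)$ and expanding $|\Ea|^2$, the cross terms are odd in $x$ and drop out, leaving weighted integrals of $|\I_\alpha(is_j x)|^2$ and $|\I_{\alpha+1}(is_j x)|^2$ on $(0,1)$ that are evaluated by classical Lommel-type identities using $J_{\alpha+1}(s_j) = 0$. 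The case $j = 0$ is immediate because $\int_{-1}^{1} d\mua = 1/(2\alpha+2)$.

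The main obstacle is completeness, which I would approach via spectral theory. Define $T = i\La$ on the dense domain $\mathcal{D} = \{f \in C^\infty([-1,1]) : f(1) = f(-1)\}$. The integration-by-parts identity above makes $T$ symmetric on $L^2((-1,1),d\mua)$, since the boundary term cancels when both $f$ and $g$ match at $\pm 1$. Because $(-1,1)$ is bounded and the weight is integrable for $\alpha > -1$, the natural self-adjoint extension of $T$ has compact resolvent and hence a discrete spectrum. Solving $\La f = \lambda f$ by expanding in the Dunkl kernel shows that every eigenfunction is a multiple of $\Ea(\lambda x)$, and imposing $f(1) = f(-1)$ forces $\I_{\alpha+1}(\lambda) = 0$, i.e., $\lambda = is_j$ for some $j \in \ZZ$; completeness of the $\{e_{\alpha,j}\}$ then follows from the spectral theorem. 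The delicate point is essential self-adjointness of $T$, because of the non-local term in $\La$ at the interior singular point $x = 0$; this is routine for $\alpha \geq -1/2$ but needs extra care when $-1 < \alpha < -1/2$. Alternatively, one may bypass operator theory via a Paley--Wiener argument transferring the Plancherel theorem for the Dunkl transform on $\RR$ to completeness on $(-1,1)$.
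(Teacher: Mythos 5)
The paper does not prove this theorem at all: it is imported from the reference \cite{CV}, so there is no internal argument to measure yours against. On their own terms, your first two steps are sound. The integration-by-parts identity you invoke is exactly the paper's formula \eqref{eq:adjoint}, and your derivation of it (splitting at the origin and observing that the surviving non-local contribution is odd in $x$) is the right one; combined with $e_{\alpha,j}(1)=e_{\alpha,j}(-1)$ from \eqref{eq:ej(1)} and the reality of the $s_j$ for $\alpha>-1$, it gives orthogonality. The normalization does reduce to the Lommel integrals $\int_0^1 J_\nu(s_jx)^2\,x\,dx$ for $\nu=\alpha,\alpha+1$ evaluated at a zero of $J_{\alpha+1}$, and the stated target value checks out. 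Two small remarks: $\int_{-1}^1 d\mua=1/(2^{\alpha+1}\Gamma(\alpha+2))$, not $1/(2\alpha+2)$ (the conclusion $\|e_{\alpha,0}\|=1$ still holds), and the paper's identity \eqref{eq:BCV} would hand you orthogonality and normalization in a single stroke.

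The genuine gap is completeness, and it is larger than the caveat you attach to it. Splitting $f$ into even and odd parts turns $\La f=\lambda f$ into the first-order system $f_e'=\lambda f_o$, $f_o'+\frac{2\alpha+1}{x}\,f_o=\lambda f_e$ on $(0,1)$, hence into a Bessel-type second-order equation for $f_e$ with indicial roots $0$ and $-2\alpha$ at the interior singular point $x=0$. The solution space is therefore two-dimensional: besides the regular branch $\Ea(\lambda x)$ there is a singular branch with $f_e\sim x^{-2\alpha}$ and $f_o\sim x^{-2\alpha-1}$, and for $-1<\alpha<0$ this branch is still square-integrable against $d\mua$ near the origin. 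In that range the interior point contributes to the deficiency analysis (a limit-circle-type situation), so \emph{the natural self-adjoint extension} is not singled out by the condition $f(1)=f(-1)$ alone; different self-adjoint realizations carry different spectra and different eigenfunction systems, and the spectral theorem then certifies completeness of \emph{some} orthonormal family, not necessarily of $\{e_{\alpha,j}\}$. Until you specify the admissible behaviour at $x=0$ and prove that the resulting operator is the closure of your symmetric one, the argument does not close. Your parenthetical alternative --- transferring completeness from the Plancherel/Paley--Wiener theory of the Dunkl transform via a sampling argument --- is much closer in spirit to the actual source \cite{CV}, which obtains the result through a Whittaker--Shannon--Kotel'nikov sampling theorem for the Dunkl transform, and is the route I would develop instead.
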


The $L^p$-convergence of these series was studied in~\cite{CPRV}.

The case $\alpha = -1/2$ corresponds to the classical trigonometric Fourier setting: $\I_{-1/2}(z) = \cos(iz)$, $\I_{1/2}(z) = \frac{\sin(iz)}{iz}$, $s_j = \pi j$, $E_{-1/2}(is_jx) = e^{i\pi jx}$, and $\{e_j\}_{j \in \ZZ}$ is the trigonometric system with the appropriate multiplicative constant so that it is orthonormal on $(-1,1)$ with respect to the normalized Lebesgue measure $d\mu_{-1/2}(x) = (2\pi)^{-1/2}\,dx$.

An important property that will be very useful in what follows is the following (the proof is straightforward, so it is omitted).

\begin{lemma}
Let $f$ and $g$ be two differentiable functions on the interval $[-1,1]$. Then,
\begin{equation}
\label{eq:adjoint}
  \int_{-1}^1 \La f(x) g(x) \,d\mua(x)
  = \frac{f(1)g(1) - f(-1)g(-1)}{2^{\alpha+1}\Gamma(\alpha+1)}
  - \int_{-1}^1 f(x) \La g(x) \,d\mua(x).
\end{equation}
\end{lemma}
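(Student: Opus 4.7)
The plan is to establish the equivalent symmetric identity
\[ \int_{-1}^1 \bigl(\La f \cdot g + f \cdot \La g\bigr)\, d\mua = \frac{f(1)g(1) - f(-1)g(-1)}{2^{\alpha+1}\Gamma(\alpha+1)}, \]
in two steps: first, a Leibniz-type reduction to a single integrand $\La(fg)$; second, a direct evaluation of $\int_{-1}^1 \La h\, d\mua$ for an arbitrary differentiable $h$.

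The first step rests on the algebraic identity
\[ \La f(x)\, g(x) + f(x)\, \La g(x) = \La(fg)(x) + \frac{2\alpha+1}{2x}\bigl(f(x)-f(-x)\bigr)\bigl(g(x)-g(-x)\bigr), \]
which follows from \eqref{eq:Duop} after expanding and collecting terms. The extra correction is the product of two $x$-odd factors divided by $x$, hence an $x$-odd function; integrated against the $x$-even weight $|x|^{2\alpha+1}$ over the symmetric interval $[-1,1]$, its contribution vanishes. Integrability at the origin is not an issue: since $f$ and $g$ are differentiable, $f(x)-f(-x) = O(x)$ and $g(x)-g(-x)=O(x)$, so the correction integrand is $O(|x|^{2\alpha+2})$, which is integrable for every $\alpha > -1$.

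The second step computes $\int_{-1}^1 \La h\, d\mua$ for any differentiable $h$. Split $h=h_e+h_o$ into its even and odd parts. Then $\La h_e = h_e'$ is odd, and $\int \La h_e\, d\mua = 0$ by symmetry. For the odd part, $\La h_o(x) = h_o'(x) + (2\alpha+1) h_o(x)/x$ is even, and the key observation is the Leibniz-type identity
\[ x^{2\alpha+1} \La h_o(x) = \frac{d}{dx}\bigl(x^{2\alpha+1}\, h_o(x)\bigr), \qquad x > 0. \]
Integrating over $(0,1)$ yields $h_o(1)$, because $h_o(0)=0$ together with $\alpha>-1$ forces $x^{2\alpha+1} h_o(x) = O(|x|^{2\alpha+2}) \to 0$ at the origin. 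Evenness of $\La h_o$ doubles the contribution on $(-1,1)$, giving
\[ \int_{-1}^1 \La h\, d\mua = \frac{2 h_o(1)}{2^{\alpha+1}\Gamma(\alpha+1)} = \frac{h(1)-h(-1)}{2^{\alpha+1}\Gamma(\alpha+1)}. \]

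Applying this to $h=fg$ and combining with the reduction from the first step yields the claimed boundary identity, which is a rearrangement of \eqref{eq:adjoint}. The main subtlety to monitor throughout is the behaviour at $x=0$ when $-1<\alpha\le -1/2$, where $|x|^{2\alpha+1}$ is unbounded; this is resolved uniformly by the observation that every odd-in-$x$ quantity appearing in the argument vanishes to first order at the origin, so all integrands stay $O(|x|^{2\alpha+2})$ and the boundary contribution at $0$ is negligible for every $\alpha>-1$.
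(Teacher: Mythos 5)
Your proof is correct. The paper does not actually give a proof of this lemma (it is declared ``straightforward'' and omitted), so there is nothing to compare against; your argument is a clean and complete way to supply the missing verification. The two ingredients --- the Leibniz-type identity
\[
  \La f(x)\,g(x)+f(x)\,\La g(x)=\La(fg)(x)+\frac{2\alpha+1}{2x}\bigl(f(x)-f(-x)\bigr)\bigl(g(x)-g(-x)\bigr),
\]
whose correction term is odd and integrates to zero against the even weight, and the divergence form $x^{2\alpha+1}\La h_o(x)=\frac{d}{dx}\bigl(x^{2\alpha+1}h_o(x)\bigr)$ applied to the odd part of $h=fg$ --- check out, and you correctly isolate and dispose of the only delicate point, namely integrability and the vanishing of the boundary term at the origin when $-1<\alpha\le -1/2$, using that odd parts of differentiable functions are $O(x)$ there. (The only residual informality is the tacit use of the fundamental theorem of calculus for a merely differentiable $h_o$; since the lemma is applied only to polynomials and to the smooth functions $e_{\alpha,j}$, this matches the paper's own level of rigor.)
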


In our context, the identity \eqref{eq:adjoint} plays the role of integration by parts.

%%%%%%%%%
\subsection{The Fourier-Dunkl series for the Bernoulli-Dunkl polynomials}
%\label{subsec:fdbd}
%%%%%%%%%

Let $\alpha > -1$ and $n\ge 1$ be fixed. Our aim now is to compute the Fourier-Dunkl expansion of the Bernoulli-Dunkl polynomial $\B_{n,\alpha}(x)$ with respect to the orthonormal system $\{e_{\alpha,j}\}_{j=-\infty}^{\infty}$, i.e.,
\[
  \B_n(x) = \sum_{j\in \ZZ} c_{j}(\B_n) \, e_j(x),
  \qquad
  c_{j}(\B_n) = \int_{-1}^{1} \B_n(y) \overline{e_j(y)} \,d\mua(y),
\]
where, as usual, we are dropping $\alpha$ in the notation. As we will see in the proof of the next lemma, the key facts for computing the Fourier-Dunkl coefficients $c_{j}(\B_n)$ are the properties of the Bernoulli-Dunkl polynomials (namely \eqref{eq:LaBn}, their parity and their values in~$\pm1$) as well as the formula~\eqref{eq:adjoint}.

To simplify the notation, let us rewrite \eqref{eq:LaBn} as
\begin{equation}
\label{eq:LaBnk}
  \La(\B_{n}) = (n+(\alpha+1/2)(1-(-1)^{n})) \B_{n-1}
  =: k_{n} \B_{n-1},
\end{equation}
and remember that ${\cc_{n}}/{\cc_{n-1}} = k_{n}$ (see~\eqref{eq:coccc}).
Moreover, instead of computing $c_{j}(\B_n)$ we are computing $c_{-j}(\B_n)$ because, using that $\overline{e_{-j}(y)} = e_{j}(y)$, the formulas become cleaner.

\begin{lemma}
For $j \in \ZZ$ and $n=1,2,\dots$, let us denote
\[
  c_{-j}(\B_n) = \int_{-1}^{1} \B_n(y) e_j(y) \,d\mua(y).
\]
Then,
\begin{equation}
\label{eq:c0Bn}
  c_{0}(\B_n) = 0.
\end{equation}
For $j \ne 0$,
\begin{equation}
\label{eq:cjB1}
  c_{-j}(\B_1) = \frac{-i}{s_j} \frac{(-1)^j}{2^{\alpha/2}\Gamma(\alpha+1)^{1/2}},
\end{equation}
and the recurrence relation
\[
  c_{-j}(\B_n) = \frac{k_{n}i}{s_j} \, c_{-j}(\B_{n-1}),
  \qquad n \ge 2
\]
holds. Therefore,
\[
  c_{-j}(\B_n) = \frac{-i^n \cc_n}{s_j^n}
  \frac{(-1)^j}{2^{1+\alpha/2}(\alpha+1)\Gamma(\alpha+1)^{1/2}},
  \qquad \text{for $j \ne 0$}.
\]
\end{lemma}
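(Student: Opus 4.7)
The plan is to exploit the Dunkl integration-by-parts identity \eqref{eq:adjoint} together with the structural properties of the Bernoulli-Dunkl polynomials recorded earlier (the identity $\La \B_n = k_n\B_{n-1}$ from \eqref{eq:LaBnk}, the parity of $\B_n$, and the fact that $\B_{2k+1}(\pm 1)=0$ for $k\ge 1$), combined with the eigenvalue relation \eqref{eq:Lambdaej} and the boundary values \eqref{eq:ej(1)} of the Fourier-Dunkl basis.

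For $c_0(\B_n)=0$ with $n\ge 1$: since $e_0$ is a nonzero constant, it suffices to check that $\int_{-1}^1 \B_n\,d\mua=0$. For odd $n$ this is immediate from parity (the measure $d\mua$ is even while $\B_n$ is odd). For even $n\ge 2$, write $\B_n = \La(\B_{n+1})/k_{n+1}$ and apply \eqref{eq:adjoint} with $g\equiv 1$; since $\La 1=0$ only the boundary term survives, and it vanishes because $n+1\ge 3$ is odd and hence $\B_{n+1}(\pm 1)=0$.

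For the recurrence I apply \eqref{eq:adjoint} with $f=\B_n$ and $g=e_j$. By \eqref{eq:LaBnk} the left-hand side equals $k_n\,c_{-j}(\B_{n-1})$, while by \eqref{eq:Lambdaej} the interior piece of the right-hand side equals $-is_j\,c_{-j}(\B_n)$. Using \eqref{eq:ej(1)}, the boundary term collapses to a constant multiple of $\B_n(1)-\B_n(-1)$, which vanishes for every $n\ge 2$ (parity kills it when $n$ is even, and $\B_n(\pm 1)=0$ kills it when $n\ge 3$ is odd). Solving yields $c_{-j}(\B_n)=(ik_n/s_j)\,c_{-j}(\B_{n-1})$. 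At $n=1$ the same computation leaves a live boundary term, since $\B_1(1)-\B_1(-1)=2$; as $c_{-j}(\B_0)=0$ for $j\ne 0$ by orthonormality of the Fourier-Dunkl system, the identity collapses to a linear equation for $c_{-j}(\B_1)$ whose solution is precisely \eqref{eq:cjB1}.

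The closed form follows by iterating the recurrence, producing $c_{-j}(\B_n)=(i/s_j)^{n-1}\,\bigl(\prod_{m=2}^n k_m\bigr)\,c_{-j}(\B_1)$; the telescoping relation $k_m=\cc_m/\cc_{m-1}$ combined with $\cc_1=2(\alpha+1)$ from \eqref{eq:ccna} gives $\prod_{m=2}^n k_m = \cc_n/(2(\alpha+1))$, and substituting \eqref{eq:cjB1} and collecting powers of $i$ yields the stated formula. The principal obstacle is the boundary bookkeeping: the boundary term of \eqref{eq:adjoint} must vanish for $n\ge 2$ (to produce the clean recurrence) but survive for $n=1$ (to supply the initial value), which forces one to treat both parities of $\B_n$ separately and to remember the subtle point that $\B_{2k+1}(\pm 1)=0$ only for $k\ge 1$. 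Once this is verified, the remaining algebra is routine.
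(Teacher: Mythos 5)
Your proof is correct and follows essentially the same route as the paper: the Dunkl integration-by-parts formula \eqref{eq:adjoint} combined with $\La\B_n=k_n\B_{n-1}$, $\La e_j=is_je_j$, the boundary values $e_j(1)=e_j(-1)$, and the parity/vanishing properties of $\B_n$ at $\pm1$, with the boundary term surviving only at $n=1$ to seed the recurrence. The only cosmetic differences are that you dispatch $c_0(\B_n)$ for odd $n$ directly by parity of $\B_n$ (the paper routes everything through $\B_{n+1}$) and you state the recurrence uniformly before splitting into parities for the boundary check.
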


\begin{proof}
Using~\eqref{eq:adjoint} we have
\begin{align*}
  c_0(\B_n) &= \int_{-1}^{1} \B_n(y)e_{0}(y)\,d\mua(y)
  \\
  &= \frac{1}{k_{n+1}} \int_{-1}^1 \La \B_{n+1}(x)
  2^{(\alpha+1)/2}\Gamma(\alpha+2)^{1/2} \,d\mua(y)
  \\
  &= 2^{(\alpha+1)/2}\Gamma(\alpha+2)^{1/2} \,
  \frac{\B_{n+1}(1) - \B_{n+1}(-1)}{2^{\alpha+1}\Gamma(\alpha+1)k_{n+1}}
  \\*
  &\qquad\qquad - \frac{2^{(\alpha+1)/2}\Gamma(\alpha+2)^{1/2}}{k_{n+1}}
  \int_{-1}^1 \B_{n+1}(y) \La 1 \,d\mua(y)
  \\
  &= \frac{\Gamma(\alpha+2)^{1/2}}{2^{(\alpha+1)/2}\Gamma(\alpha+1)}
  \cdot \frac{\B_{n+1}(1) - \B_{n+1}(-1)}{k_{n+1}}.
\end{align*}
Since $\B_{n+1}(x)$ is odd if $n$ is even, and even if $n$ is odd, we have
\begin{equation*}
  c_0(\B_n) =
  \begin{cases}
  0, & \text{if $n$ is odd,}
  \\
  \dfrac{\Gamma(\alpha+2)^{1/2}}{2^{(\alpha-1)/2}\Gamma(\alpha+1)}
  \cdot \dfrac{\B_{n+1}(1)}{k_{n+1}}, & \text{if $n$ is even.}
  \end{cases}
\end{equation*}
But $\B_{2m+1}(1) = 0$ for $m\ge1$, so we obtain~\eqref{eq:c0Bn}.

To prove~\eqref{eq:cjB1}, let us apply \eqref{eq:Lambdaej}, \eqref{eq:adjoint} and~\eqref{eq:LaBnk}. Thus,
\begin{align*}
  c_{-j}(\B_{1}) &= \int_{-1}^{1} \B_{1}(x) e_{j}(y)\, d\mua(y)
  = \frac{1}{is_j} \int_{-1}^{1} \B_{1}(x) \La e_{j}(y)\, d\mua(y) \\
  &= \frac{1}{is_j}
  \frac{\B_1(1) e_{j}(1) - \B_1(-1) e_{j}(-1)}{2^{\alpha+1}\Gamma(\alpha+1)}
  - \frac{1}{is_j} \int_{-1}^{1} \La \B_1(y) e_{j}(y)\, d\mua(y) \\
  &= \frac{1}{is_j}
  \frac{\B_1(1) e_{j}(1) - \B_1(-1) e_{j}(-1)}{2^{\alpha+1}\Gamma(\alpha+1)}
  - \frac{1}{is_j} \frac{1}{k_1} \int_{-1}^{1} \B_0(y) e_{j}(y)\, d\mua(y) \\
  &= \frac{1}{is_j}
  \frac{\B_1(1) e_{j}(1) - \B_1(-1) e_{j}(-1)}{2^{\alpha+1}\Gamma(\alpha+1)}
\end{align*}
where the last integral vanishes due to the orthogonality of $\{e_{j}\}_{j\in\ZZ}$:
\[
  \int_{-1}^{1} \B_0(y) e_{j}(y)\, d\mua(y)
  = \frac{1}{2^{(\alpha+1)/2}\Gamma(\alpha+2)^{1/2}}
  \int_{-1}^{1} e_{0}(y) e_{j}(y)\, d\mua(y) = 0.
\]
Then, using that $\B_1(1) = 1$ and $\B_1(-1) = -1$, together with \eqref{eq:ej(1)},
we get~\eqref{eq:cjB1}.

To prove the recurrence relation,
let us start looking what happens if $n$ is even, say $n=2m$.
From \eqref{eq:Lambdaej}, \eqref{eq:adjoint} and \eqref{eq:LaBnk},
and taking into account that $\B_{2m}(1) = \B_{2m}(-1)$ and $e_{j}(1) = e_{j}(-1)$, it follows that
\begin{align*}
  c_{-j}(\B_{2m}) &= \int_{-1}^{1} \B_{2m}(y) e_{j}(y)\, d\mua(y)
  = \frac{1}{is_j} \int_{-1}^{1} \B_{2m}(y) \La e_{j}(y)\, d\mua(y)
  \\
  &= \frac{1}{is_j}
    \frac{\B_{2m}(1) e_{j}(1) - \B_{2m}(-1) e_{j}(-1)}
    {2^{\alpha+1}\Gamma(\alpha+1)}
  \\*
  &\qquad\qquad
  - \frac{1}{is_j} \int_{-1}^{1} \La \B_{2m}(y) e_{j}(y)\, d\mua(y)
  \\
  &= - k_{2m}\,\frac{1}{is_j} \int_{-1}^{1} \B_{2m-1}(y) e_{j}(y)\, d\mua(y)
  = k_{2m}\,\frac{i}{s_j} c_{-j}(\B_{2m-1}).
\end{align*}
In the case $n=2m+1$, we can use \eqref{eq:Lambdaej}, \eqref{eq:adjoint} and \eqref{eq:LaBnk} again, and the fact that $\B_{2m+1}(1) = \B_{2m+1}(-1) = 0$. Then,
\begin{align*}
  c_{-j}(\B_{2m+1}) &= \int_{-1}^{1} \B_{2m+1}(y) e_{j}(y)\, d\mua(y)
  = \frac{1}{is_j} \int_{-1}^{1} \B_{2m+1}(y) \La e_{j}(y)\, d\mua(y)
  \\
  &= \frac{1}{is_j}
    \frac{\B_{2m+1}(1) e_{j}(1) - \B_{2m+1}(-1) e_{j}(-1)}
    {2^{\alpha+1}\Gamma(\alpha+1)}
  \\*
  &\qquad\qquad - \frac{1}{is_j} \int_{-1}^{1} \La \B_{2m+1}(y) e_{j}(y)\, d\mua(y)
  \\
  &= - k_{2m+1}\,\frac{1}{is_j} \int_{-1}^{1} \B_{2m}(y) e_{j}(y)\, d\mua(y)
  = k_{2m+1}\,\frac{i}{s_j} c_{-j}(\B_{2m}).
\end{align*}

Finally, the recurrence relation, together with $k_nk_{n-1} \cdots k_2 = \cc_n/\cc_1$, $\cc_1=2(\alpha+1)$ and~\eqref{eq:cjB1}, gives
\[
  c_{-j}(\B_n) = \frac{k_nk_{n-1}\cdots k_2}{s_j^{n-1}} \, i^{n-1} c_{-j}(\B_1)
  = \frac{-i^n \cc_n}{s_j^n} \frac{(-1)^j}{2^{1+\alpha/2}(\alpha+1)\Gamma(\alpha+1)^{1/2}}
\]
and the proof is finished.
\end{proof}

A direct consequence of the previous lemma is the following (remember that $s_{-j} = -s_j$):

\begin{theorem}
\label{thm:sfbd}
For every $\alpha>-1$ and $n\ge 1$,
\begin{equation}
\label{eq:sfbd}
  \B_n(x) = \frac{-(-i)^n\cc_n}{2^{1+\alpha/2}(\alpha+1)\Gamma(\alpha+1)^{1/2}}
  \sum_{j\in \ZZO} \frac{(-1)^j}{s_j^n} \, e_j(x),
\end{equation}
where the convergence is in $L^2([-1,1],d\mua)$.
\end{theorem}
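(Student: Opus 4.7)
The plan is to derive~\eqref{eq:sfbd} as an immediate corollary of the preceding lemma together with the completeness of the Fourier-Dunkl system $\{e_{\alpha,j}\}_{j\in\ZZ}$ in $L^2([-1,1],d\mua)$ stated earlier in this section. Since $\B_n$ is a polynomial and hence bounded on $[-1,1]$, it belongs to $L^2([-1,1],d\mua)$, so the abstract theory of orthonormal bases in Hilbert space provides the $L^2$-convergent expansion
\[
  \B_n(x) = \sum_{j\in\ZZ} \langle \B_n,e_j\rangle \, e_j(x),
  \qquad \langle f,g\rangle = \int_{-1}^1 f(y)\overline{g(y)}\,d\mua(y).
\]

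Next, I would rewrite these Hilbert-space coefficients in the notation of the preceding lemma. Because $\overline{e_j(y)} = e_{-j}(y)$, one has
\[
  \langle \B_n,e_j\rangle = \int_{-1}^1 \B_n(y)\, e_{-j}(y)\,d\mua(y) = c_j(\B_n),
\]
where $c_{-j}(\B_n) = \int_{-1}^1 \B_n(y)\, e_j(y)\,d\mua(y)$ as defined there. The lemma yields $c_0(\B_n)=0$ and, for $j\in\ZZO$,
\[
  c_j(\B_n) = \frac{-i^n\cc_n}{s_{-j}^n}\,\frac{(-1)^{-j}}{2^{1+\alpha/2}(\alpha+1)\Gamma(\alpha+1)^{1/2}}.
\]
Using $s_{-j}=-s_j$ and $(-1)^{-j}=(-1)^j$, I would simplify $-i^n/s_{-j}^n = -i^n/((-1)^n s_j^n) = -(-i)^n/s_j^n$, which is precisely the factor appearing in~\eqref{eq:sfbd}.

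Assembling the series with the $j=0$ contribution absent then yields~\eqref{eq:sfbd}, with $L^2$-convergence inherited from the orthonormal basis expansion. No substantial obstacle remains at this stage, since the substantive computations---the recurrence $c_{-j}(\B_n) = (k_n\, i/s_j)\,c_{-j}(\B_{n-1})$ obtained via the integration-by-parts identity~\eqref{eq:adjoint}, combined with the initial value~\eqref{eq:cjB1} and the telescoping $k_n k_{n-1}\cdots k_2 = \cc_n/\cc_1$---have already been carried out in the preceding lemma. The theorem itself is merely the bookkeeping step of reassembling the coefficients into the basis expansion.
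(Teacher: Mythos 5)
Your proposal is correct and follows exactly the route the paper intends: the theorem is stated as a direct consequence of the preceding lemma, and your reindexing via $s_{-j}=-s_j$ and $(-1)^{-j}=(-1)^j$ to turn $-i^n/s_{-j}^n$ into $-(-i)^n/s_j^n$ is precisely the bookkeeping the paper's parenthetical remark alludes to. The appeal to completeness of $\{e_{\alpha,j}\}_{j\in\ZZ}$ in $L^2([-1,1],d\mua)$ for the convergence is likewise the paper's implicit argument, so nothing further is needed.
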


Notice that Theorem~\ref{thm:sumassr} would follow also by evaluating the series \eqref{eq:sfbd} at $x=1$ and $x=0$, provided that this series were proved to converge pointwisely at those points. However, the pointwise behaviour of this Fourier series is out of the scope of this paper.

In the case $\alpha=-1/2$, Theorem~\ref{thm:sfbd} becomes the Hurwitz expansion
\[
  B_n(x) = -\frac{n!}{(2\pi i)^n} \sum_{j\in \ZZO} \frac{e^{2\pi i j x}}{j^n},
  \qquad n \ge 1,
\]
for the classical Bernoulli polynomials.

For the sake of completeness, we include here other relevant Fourier-Dunkl series.

\begin{theorem}
For every $\alpha>-1$ and $t\in \CC\setminus \{0,\pm s_1,\pm s_2,\dots\}$,
\begin{equation}
\label{eq:sfeatx}
  2(\alpha+1)2^{\alpha/2}\Gamma(\alpha+1)^{1/2} \frac{\Ea(itx)}{t\I_{\alpha+1}(it)}
  = \frac{\sqrt{2(\alpha+1)}}{t}e_0
  + \sum_{j\in \ZZO} \frac{(-1)^je_j(x)}{t-s_j},
\end{equation}
where the convergence is in $L^2([-1,1],d\mua)$.
\end{theorem}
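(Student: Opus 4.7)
The plan is to recognize the stated identity as the Fourier-Dunkl expansion, in $L^2([-1,1],d\mua)$, of the function
\[
g_t(x) = 2(\alpha+1)\,2^{\alpha/2}\Gamma(\alpha+1)^{1/2}\,\frac{\Ea(itx)}{t\I_{\alpha+1}(it)}.
\]
For every admissible $t$, this is a bounded (indeed analytic) function of $x$ on $[-1,1]$, hence belongs to $L^2([-1,1],d\mua)$; by the completeness theorem quoted from \cite{CV}, its Fourier-Dunkl series converges to it in $L^2$. So the whole task reduces to computing the Fourier-Dunkl coefficients
\[
c_j = \int_{-1}^{1} g_t(x)\,\overline{e_j(x)}\,d\mua(x) = \int_{-1}^{1} g_t(x)\,e_{-j}(x)\,d\mua(x)
\]
and checking that $c_j = (-1)^j/(t-s_j)$ for $j\neq 0$ and $c_0\,e_0 = \sqrt{2(\alpha+1)}/t\cdot e_0$.

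For $j\in\ZZO$, the idea is to apply the integration-by-parts formula~\eqref{eq:adjoint} to $f(x)=\Ea(itx)$ and $g(x)=e_{-j}(x)$, using $\La \Ea(it\cdot) = it\Ea(it\cdot)$ and $\La e_{-j} = is_{-j} e_{-j} = -is_j\,e_{-j}$ (see~\eqref{eq:Lambdaej}). This yields the algebraic relation
\[
i(t-s_j)\int_{-1}^{1}\Ea(itx)\,e_{-j}(x)\,d\mua(x) = \frac{\Ea(it)\,e_{-j}(1) - \Ea(-it)\,e_{-j}(-1)}{2^{\alpha+1}\Gamma(\alpha+1)}.
\]
Two simple ingredients then finish the job: since $\I_\alpha$ and $\I_{\alpha+1}$ are even, the definition of $\Ea$ gives
\[
\Ea(it) - \Ea(-it) = \frac{it}{\alpha+1}\,\I_{\alpha+1}(it),
\]
and by~\eqref{eq:ej(1)} one has $e_{-j}(1) = e_{-j}(-1) = (-1)^j\,2^{\alpha/2}\Gamma(\alpha+1)^{1/2}$. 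Substituting, dividing by $t\I_{\alpha+1}(it)$, and multiplying by the overall constant, the factors $\I_{\alpha+1}(it)$, $t$, and all the $\Gamma$'s and powers of $2$ cancel out, leaving exactly $(-1)^j/(t-s_j)$.

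For $j=0$ the same argument with the pair $(\Ea(it\cdot),1)$ in~\eqref{eq:adjoint} (noting that $\La(1)=0$) gives
\[
\int_{-1}^{1}\Ea(itx)\,d\mua(x) = \frac{\I_{\alpha+1}(it)}{(\alpha+1)\,2^{\alpha+1}\Gamma(\alpha+1)}.
\]
Multiplying by the constant factor in front of $\Ea(itx)/(t\I_{\alpha+1}(it))$ and then by $e_0 = 2^{(\alpha+1)/2}\Gamma(\alpha+2)^{1/2}$, a direct simplification (using $\Gamma(\alpha+2)=(\alpha+1)\Gamma(\alpha+1)$) produces $\sqrt{2(\alpha+1)}/t$, matching the stated $j=0$ term.

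The conceptual part is thus only two applications of~\eqref{eq:adjoint}; the only real obstacle I anticipate is the bookkeeping of the many multiplicative constants (factors of $2^{\alpha/2}$, $\Gamma(\alpha+1)^{1/2}$, $\alpha+1$, etc.), together with being careful that $\overline{e_j}=e_{-j}$ forces coefficients to be computed against $e_{-j}$ rather than $e_j$, which is exactly why the sign in $\La e_{-j}=-is_j\,e_{-j}$ produces the factor $t-s_j$ (rather than $t+s_j$) in the denominator.
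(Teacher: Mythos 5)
Your proof is correct, and it takes a genuinely different route from the paper's. The paper computes the coefficients $c_j(\Ea(it\,\cdot))$ by invoking the closed-form integral
$\int_{-1}^1 \Ea(ixr)\Ea(-iyr)\,d\mua(r)$ from \cite[Lemma~1]{BCV} (formula~\eqref{eq:BCV}), which it must moreover extend from $\alpha\ge-1/2$, $x,y\in\RR$ to $\alpha>-1$, $x,y\in\CC$; it then specializes $y=s_j$ and uses $\I_{\alpha+1}(is_j)=0$ to simplify the numerator to $\frac{it}{\alpha+1}\I_{\alpha+1}(it)\I_\alpha(is_j)$. You instead derive the same coefficients directly from the integration-by-parts identity~\eqref{eq:adjoint} combined with the eigenfunction relations $\La\Ea(it\,\cdot)=it\,\Ea(it\,\cdot)$ and $\La e_{-j}=-is_j e_{-j}$, which is exactly the relevant special case of~\eqref{eq:BCV} re-derived on the fly. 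What your route buys is self-containment: it uses only the lemma stated in the same section and parallels the computation already carried out for the coefficients $c_{-j}(\B_n)$, avoiding the external reference and its extension. What the paper's route buys is generality (the kernel formula~\eqref{eq:BCV} holds for arbitrary $y$, not just the eigenvalues $s_j$) at the cost of importing it. Your constant bookkeeping checks out: for $j\ne 0$ the relation $i(t-s_j)\int_{-1}^1\Ea(itx)e_{-j}(x)\,d\mua(x)=\frac{(-1)^j 2^{\alpha/2}\Gamma(\alpha+1)^{1/2}}{2^{\alpha+1}\Gamma(\alpha+1)}\cdot\frac{it}{\alpha+1}\I_{\alpha+1}(it)$ reproduces the paper's~\eqref{eq:cjE}, and the $j=0$ computation reproduces~\eqref{eq:c0E}; in both cases multiplying by the prefactor $2(\alpha+1)2^{\alpha/2}\Gamma(\alpha+1)^{1/2}/(t\I_{\alpha+1}(it))$ yields exactly $(-1)^j/(t-s_j)$ and $\sqrt{2(\alpha+1)}/t$. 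The appeal to completeness of $\{e_{\alpha,j}\}_{j\in\ZZ}$ for the $L^2$ convergence is the same in both arguments.
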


\begin{proof}
The proof is a consequence of the identity
\begin{equation}
\label{eq:BCV}
  \int_{-1}^1 \Ea(ixr)\Ea(-iyr)\, d\mua(r)
  = \frac{1}{2^{\alpha+1}\Gamma(\alpha+1)}
    \frac{\Ea(ix)\Ea(-iy)-\Ea(-ix)\Ea(iy)}{i(x-y)}
\end{equation}
(see \cite[Lemma~1]{BCV}), which holds for $\alpha > -1$, $x,y\in \mathbb{C}$, and $x \ne  y$ (the proof in \cite{BCV} is given for $\alpha\ge -1/2$ and $x,y\in \RR$ but the result can be extended to $\alpha > -1$ and $x,y\in \CC$ without any problem).

By \eqref{eq:BCV}, the Fourier-Dunkl coefficients of the function $\Ea(itx)$ are,  for $j\ne0$,
\begin{align*}
  c_j(\Ea(itx)) &= \int_{-1}^1 \Ea(itx) \overline{e_j(x)}\,d\mua(x) \\
  &= \frac{2^{\alpha/2}\Gamma(\alpha+1)^{1/2}}{|\I_\alpha(is_j)|} \int_{-1}^1 \Ea(itx) \Ea(-is_j x)\, d\mua(x) \\
  &= \frac{1}{2^{\alpha/2+1}\Gamma(\alpha+1)^{1/2}|\I_\alpha(is_j)|}
  \frac{\Ea(it)\Ea(-isj)-\Ea(-it)\Ea(is_j)}{i(t-s_j)}.
\end{align*}
Now, from the definition of $\Ea$, we have
\[
\Ea(it)\Ea(-isj)-\Ea(-it)\Ea(is_j) = \frac{it}{\alpha+1}\I_{\alpha+1}(it)
\I_{\alpha}(is_j)
\]
and
\begin{align}
  c_j(\Ea(itx)) &= \frac{\I_\alpha(is_j)}{2^{\alpha/2+1}(\alpha+1)\Gamma(\alpha+1)^{1/2}|\I_\alpha(is_j)|}
  \frac{t\I_{\alpha+1}(it)}{(t-s_j)} \notag \\
  &= \frac{(-1)^j}{2^{\alpha/2+1}(\alpha+1)\Gamma(\alpha+1)^{1/2}}
  \frac{t\I_{\alpha+1}(it)}{(t-s_j)}.
\label{eq:cjE}
\end{align}
For $j=0$, using again \eqref{eq:BCV} and the identity $\Ea(0)=1$ gives
\begin{align}
  c_0(\Ea(ixt)) &= 2^{(\alpha+1)/2}\Gamma(\alpha+2)^{1/2}\int_{-1}^1 \Ea(itx)\,d\mua(x) \notag \\
  &= \frac{\sqrt{2(\alpha+1)}}{2^{\alpha/2+1}\Gamma(\alpha+1)^{1/2}}\frac{\Ea(it)-\Ea(-it)}{it} \notag \\
  &= \frac{\sqrt{2(\alpha+1)}}{2^{\alpha/2+1}(\alpha+1)\Gamma(\alpha+1)^{1/2}} \, \I_{\alpha+1}(it).
\label{eq:c0E}
\end{align}
Finally, from \eqref{eq:cjE} and \eqref{eq:c0E}, we conclude~\eqref{eq:sfeatx}.
\end{proof}

The partial fraction decompositions \eqref{eq:pfd1} and~\eqref{eq:pfd2} (for $m=0$) are very much related to the Fourier-Dunkl series~\eqref{eq:sfeatx}. Indeed, \eqref{eq:pfd2} (for $m=0$) would follow evaluating the Fourier-Dunkl series at $x=0$. At $x=1$, the Fourier-Dunkl series \eqref{eq:sfeatx} converges to
\[
  \frac{2(\alpha+1) 2^{\alpha/2}\Gamma(\alpha+1)^{1/2}}{t\I_{\alpha+1}(it)}
  \frac{\Ea(it)+\Ea(-it)}{2}
  = 2(\alpha+1)2^{\alpha/2}\Gamma(\alpha+1)^{1/2} \frac{\I_{\alpha}(it)}{t\I_{\alpha+1}(it)}
\]
(the mean value of side limits), which gives \eqref{eq:pfd1}. But, as commented above, the pointwise convergence of the Fourier-Dunkl series \eqref{eq:sfeatx} is out of the scope of this paper.

%%%%%%%%%

%%%%%%%%%

%%%%%%%%%
\end{document}